\theoremstyle{plain}
\newtheorem{theorem}{Theorem}
\newtheorem{lemma}[theorem]{Lemma}
\newtheorem{corollary}[theorem]{Corollary}
\theoremstyle{definition}
\newtheorem{definition}{Definition}
\DeclareMathOperator{\wt}{wt}
\begin{document}

\begin{center}
\vskip 1cm{\LARGE\bf Nim Fractals}
\vskip 1cm
\large
Tanya Khovanova\\
Department of Mathematics\\
MIT\\
Cambridge, MA, 02139\\
\href{mailto:tanyakh@yahoo.com}{\tt tanyakh@yahoo.com} \\
\ \\
Joshua Xiong\\
Acton-Boxborough Regional High School\\
Acton, MA, 01719\\
\href{mailto:joshxiong7@gmail.com}{\tt joshxiong7@gmail.com}
\end{center}

\begin{abstract}
We enumerate P-positions in the game of Nim in two different ways. In one series of sequences we enumerate them by the maximum number of counters in a pile. In another series of sequences we enumerate them by the total number of counters. 

We show that the game of Nim can be viewed as a cellular automaton, where the total number of counters divided by 2 can be considered as a generation in which P-positions are born. We prove that the three-pile Nim sequence enumerated by the total number of counters is a famous toothpick sequence based on the Ulam-Warburton cellular automaton. We introduce 10 new sequences.
\end{abstract}

\section{Introduction}
\label{sec:intro}
The study of the game of Nim is fundamental to the field of combinatorial game theory. Nim is known as an \emph{impartial combinatorial game}, a game in which each player has the same moves available at each point in the game and has a complete amount of information about the game and the potential moves. In addition, there is no randomness in the game (such as rolling dice).

Originally introduced by Charles Bouton in 1901 \cite{Bouton}, Nim has played a role in many combinatorial games. The relationship between Nim and other impartial combinatorial games can be described with the Sprague-Grundy Theorem \cite{Grundy, Sprague}, which states that all impartial games are equivalent to a Nim heap. 

Although the game of Nim has been studied extensively \cite{BCG, HW}, in this paper, we invent new sequences related to enumeration of the P-positions of Nim.

In Section~\ref{sec:GameNim} we introduce the game of Nim as well as Bouton's general formula for P-positions. In Section~\ref{sec:sequences} we define the sequences we want to count and provide examples for the games with one and two piles. One set of sequences is indexed by the maximum number of counters in a P-position and the other by the total number of counters.

We continue with calculating formulae for the sequences indexed by the maximum number of counters in Section~\ref{sec:max}. We calculate the three-piles case in Section~\ref{sec:three} and the four-piles case in Section~\ref{sec:four}. It can be noted that the calculation method is different for an odd and an even number of piles. But these sections provide enough background for a general formula in Section~\ref{sec:many}.

Then we turn our attention to the sequences indexed by the total number of counters in Section~\ref{sec:total}. We start with calculated the three-piles case in Section~\ref{sec:threetotal} and discover that this sequence describes an evolution of a particular cellular automaton. We explain in Section~\ref{sec:evolution} how Nim can be viewed as an automaton. We extend the definitions to allow any impartial combinatorial game to be viewed as an automaton in Section~\ref{sec:evolany}. In Section~\ref{sec:UW} we define the Ulam-Warburton automaton, three branches of which correspond to Nim with three piles. We proceed to enumerating four piles in Section~\ref{sec:fourtotal} and arbitrarily many piles in Section~\ref{sec:manytotal}.

\section{The Game of Nim}\label{sec:GameNim}

In Nim, there are $k$ piles of counters, with $p_i$ counters in each pile. Two players alternate turns by taking some or all of the counters in a single pile. The player who takes the last counter (or equivalently, makes the last move) wins. We may denote the state or position of a game with the ordered tuple $P = (p_1, p_2, \dotsc, p_k)$. 

We begin by introducing some general definitions in game theory. Assuming that both players use an optimal strategy, there are two types of positions in a game such as Nim: 

\begin{definition}
A \emph{P-position} is a position in which the previous player will win (the one who just moved).
An \emph{N-position} is a position in which the next player will win (the one about to move).
\end{definition}

We denote the set of P-positions as $\mathcal{P}$, and the set of N-positions as $\mathcal{N}$. Thus, any move from a P-position must be an N-position, and conversely, every N-position has at least one move that results in a P-position. This motivates the following theorem \cite{ANW}:

\begin{theorem}\label{thm:general}
Suppose that the positions of a finite impartial game can be partitioned into disjoint sets $A$ and $B$ with the properties:
\begin{enumerate}
\item Every move of a position in $A$ is to a position in $B$.
\item Every position in $B$ has at least one move to a position in $A$.
\item The terminal positions are in $A$.
\end{enumerate}
\noindent Then $A = \mathcal{P}$ and $B = \mathcal{N}$.
\end{theorem}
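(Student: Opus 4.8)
The plan is to prove the two inclusions $A \subseteq \mathcal{P}$ and $B \subseteq \mathcal{N}$ simultaneously by induction, and then to deduce the equalities from the fact that both $\{A,B\}$ and $\{\mathcal{P},\mathcal{N}\}$ partition the same set of positions. Because the game is finite, I would attach to every position a nonnegative integer rank, namely the length of the longest play starting from it; terminal positions then have rank $0$, and every legal move strictly decreases the rank. This supplies a well-founded order on positions that licenses the induction, and it is also what guarantees that the P/N dichotomy is meaningful in the first place.

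For the base case I would treat the terminal positions. A terminal position admits no move, so the player about to move cannot move and therefore loses; hence the previous player wins, and the position is a P-position. By hypothesis (3) every terminal position lies in $A$, so at rank $0$ the claim holds: the terminal positions in $A$ are exactly P-positions, and there are no terminal positions in $B$ to worry about.

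For the inductive step, I would fix a non-terminal position $p$ and assume the claim for every position of strictly smaller rank, in particular for every position reachable from $p$ in one move. If $p \in A$, then by hypothesis (1) every move sends $p$ into $B$, and by the inductive hypothesis each such target lies in $\mathcal{N}$; thus whatever the mover does, the opponent inherits an N-position and wins, so $p$ is a P-position and $p \in \mathcal{P}$. If instead $p \in B$, then by hypothesis (2) some move sends $p$ into $A$, and by the inductive hypothesis that target lies in $\mathcal{P}$; the mover may make precisely this move, handing the opponent a P-position in which the opponent, now about to move, loses, so $p$ is an N-position and $p \in \mathcal{N}$. This completes the induction and gives $A \subseteq \mathcal{P}$ and $B \subseteq \mathcal{N}$.

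Finally, since $A$ and $B$ are disjoint with $A \cup B$ equal to the whole set of positions, and likewise $\mathcal{P}$ and $\mathcal{N}$ are disjoint with union the whole set of positions, the two inclusions force $A = \mathcal{P}$ and $B = \mathcal{N}$. The step requiring the most care is the induction framework itself: one must use finiteness to produce a well-founded rank before speaking of optimal play, and one must keep the definitions of $\mathcal{P}$ and $\mathcal{N}$ straight, since the winner is named relative to the move just made rather than the move about to be made, which is exactly where a sign error would creep in. Beyond this bookkeeping I expect no computational difficulty.
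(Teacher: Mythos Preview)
The paper does not supply its own proof of this theorem; it simply states the result and cites \cite{ANW}. Your argument is the standard one found in that reference and elsewhere: rank positions by the length of the longest play, induct on rank to obtain $A \subseteq \mathcal{P}$ and $B \subseteq \mathcal{N}$, then use the two partitions to promote the inclusions to equalities. The proof is correct and complete as written; there is nothing in the paper to compare it against beyond the citation.
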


With respect to Nim, by definition, the position $(0,0,\dotsc, 0)$ will be a (terminal) P-position. Note that the general winning strategy is to move to a P-position.

To explicitly give a formula for P-positions, we need the following definition:

\begin{definition}
The \emph{nim-sum} of two non-negative integers $x, y$ is their bit-wise XOR: $x\oplus y$. Suppose that $x = (b_j\dotsc b_2b_1)_2$ and $y =(c_j\dotsc c_2c_1)_2$ in binary with leading zeroes as necessary, where $j$ is the maximum number of digits in the binary representation of $x$ and $y$. Then the nim-sum of $x$ and $y$ is $(d_j\dotsc d_2d_1)_2$, where $d_i = b_i + c_i \pmod{2}$ for $1\le i\le j$.
\end{definition}

The nim-sum is clearly associative and commutative, and $0$ is the identity element. Further, $x\oplus y = x\oplus z$ implies $y = z$. We can extend the concept of nim-sum to a position $(p_1,p_2,\ldots,p_k)$: it is simply $p_1 \oplus p_2 \oplus \cdots \oplus p_k$.

The following theorem (see \cite{Bouton}) describes the set of P-positions in Nim.

\begin{theorem}[Bouton, 1901]\label{thm:nim}
$\mathcal{P}$ is the set of positions in the game of Nim with nim-sum $0$, and $\mathcal{N}$ is the complement.
\end{theorem}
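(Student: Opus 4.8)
The plan is to apply the characterization in Theorem~\ref{thm:general}. I will set $A$ to be the set of Nim positions with nim-sum $0$ and $B$ to be its complement (positions with nonzero nim-sum), and then verify the three hypotheses of that theorem. Once all three are checked, Theorem~\ref{thm:general} immediately forces $A = \mathcal{P}$ and $B = \mathcal{N}$, which is exactly the claim.

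First I would dispose of condition~(3): the only terminal position is $(0,0,\dotsc,0)$, whose nim-sum is $0$, so the terminal position lies in $A$. Next, for condition~(1), I would take any position in $A$, so $p_1 \oplus p_2 \oplus \cdots \oplus p_k = 0$, and consider an arbitrary legal move, which replaces some pile value $p_i$ by a strictly smaller value $p_i'$. The new nim-sum is the old nim-sum with $p_i$ swapped out for $p_i'$; using the cancellation property $x \oplus y = x \oplus z \implies y = z$ noted after the definition, if the new nim-sum were still $0$ we would conclude $p_i = p_i'$, contradicting that the move strictly decreased the pile. Hence every move out of $A$ lands in $B$.

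The main work, and the step I expect to be the real obstacle, is condition~(2): showing that from any position in $B$ there is a legal move into $A$. Here I would let $s = p_1 \oplus \cdots \oplus p_k \neq 0$ and let $d$ be the index of the most significant set bit of $s$. At least one pile, say $p_i$, must have a $1$ in bit position $d$, since otherwise that bit of $s$ could not be set. I would then define $p_i' = p_i \oplus s$ and argue two things: that this is a legal move (i.e.\ $p_i' < p_i$), and that it produces nim-sum $0$. The nim-sum check is the easy half, since replacing $p_i$ by $p_i \oplus s$ changes the total nim-sum to $s \oplus s = 0$. The delicate half is legality: I must show $p_i \oplus s < p_i$, which follows because XORing with $s$ clears the bit in position $d$ of $p_i$ (both are $1$ there) while leaving all higher bits of $p_i$ untouched, so the value strictly decreases.

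Throughout I would work with the binary representations $p_i = (b_j^{(i)} \cdots b_1^{(i)})_2$ from the nim-sum definition and reason bit by bit about position $d$ and the bits above it. The one subtlety to state carefully is why clearing a bit strictly below the leading zone cannot be compensated by lower-order bits raising the value; this is handled by the standard fact that a nonzero change confined to bit $d$ and below, which turns off bit $d$, yields a number smaller than the original regardless of the lower bits, since bit $d$ dominates everything beneath it. With conditions~(1)--(3) verified, the conclusion is immediate.
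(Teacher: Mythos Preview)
Your argument is correct and is exactly the standard proof of Bouton's theorem: set $A$ to be the nim-sum-zero positions, verify the three hypotheses of Theorem~\ref{thm:general}, and conclude. All three verifications are handled properly, including the key step that $p_i \oplus s < p_i$ when $p_i$ has a $1$ in the top bit of $s$.

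Note, however, that the paper does not actually supply its own proof of this theorem: it simply states the result and cites Bouton~\cite{Bouton}. The machinery of Theorem~\ref{thm:general} is introduced precisely so that this classical argument could be run, but the paper leaves the verification to the reference. Your write-up is the intended application of that framework, so there is nothing to compare against beyond saying that you have filled in what the paper omits.
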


From here, we can show that the last pile in a P-position is a function of the previous $k-1$ piles.

\begin{corollary}\label{thm:ppos}
A position $(p_1,\ldots,p_{k-1}, p_k)$ is a P-position if and only if $p_k = p_1\oplus\cdots\oplus p_{k-1}$.
\end{corollary}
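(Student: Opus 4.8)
The plan is to derive Corollary~\ref{thm:ppos} directly from Bouton's characterization (Theorem~\ref{thm:nim}) together with the elementary algebraic properties of the nim-sum that were already recorded in the excerpt, namely that $\oplus$ is associative, commutative, has $0$ as identity, and satisfies the cancellation property $x \oplus y = x \oplus z \implies y = z$. By Theorem~\ref{thm:nim}, the position $(p_1,\ldots,p_{k-1},p_k)$ is a P-position if and only if its nim-sum vanishes, that is, if and only if $p_1 \oplus \cdots \oplus p_{k-1} \oplus p_k = 0$. So the entire task reduces to showing that this single equation is equivalent to $p_k = p_1 \oplus \cdots \oplus p_{k-1}$.

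First I would establish one more basic fact about the nim-sum that the excerpt has not quite stated explicitly: every element is its own inverse, i.e. $x \oplus x = 0$ for all $x$. This is immediate from the bit-wise definition, since each binary digit satisfies $b_i + b_i \equiv 0 \pmod 2$. Granting this (and the identity property of $0$), the equivalence is a one-line manipulation. Writing $s = p_1 \oplus \cdots \oplus p_{k-1}$ for brevity, the P-position condition reads $s \oplus p_k = 0$. Nim-summing both sides by $s$ and using associativity gives $s \oplus (s \oplus p_k) = (s \oplus s) \oplus p_k = 0 \oplus p_k = p_k$ on the left and $s \oplus 0 = s$ on the right, so $p_k = s$. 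Conversely, if $p_k = s$, then $s \oplus p_k = s \oplus s = 0$, so the nim-sum is $0$ and the position is a P-position.

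An equally clean alternative avoids invoking self-inverse directly and instead leans on the cancellation property already highlighted in the text: from $s \oplus p_k = 0 = s \oplus s$ one cancels $s$ on the left to conclude $p_k = s$, and for the converse one substitutes $p_k = s$ into the nim-sum. I would present whichever phrasing keeps the argument self-contained relative to the stated facts; the cancellation-based version is appealing since the excerpt explicitly flags that property, though it still quietly uses $s \oplus s = 0$.

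Honestly there is no genuine obstacle here — the corollary is a formal rearrangement of an equation in an abelian group of exponent two, and the only thing to be careful about is to cite exactly the nim-sum properties that have been made available (associativity, commutativity, identity, cancellation, and self-inverse) rather than silently assuming anything stronger. The sole point deserving a sentence of care is the self-inverse fact $x \oplus x = 0$, since it is the crux that turns "nim-sum zero" into an explicit solved formula for the last pile; everything else is bookkeeping. I would therefore state that fact first, then give the two-direction equivalence in a couple of lines.
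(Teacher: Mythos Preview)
Your proposal is correct, and in fact the paper offers no proof at all for this corollary---it is stated immediately after Bouton's theorem as a self-evident consequence. Your argument supplies exactly the routine algebraic unpacking the paper leaves implicit: using associativity, the identity, and the self-inverse property $x \oplus x = 0$ (or equivalently cancellation together with $s \oplus s = 0$) to rewrite $p_1 \oplus \cdots \oplus p_k = 0$ as $p_k = p_1 \oplus \cdots \oplus p_{k-1}$.
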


\section{Nim Sequences}\label{sec:sequences}

We would like to enumerate P-positions in the game of Nim. We assume that the number of piles, $k$, is fixed, which means that piles of zero are allowed. There are two natural ways to enumerate these P-positions.

In the first set of sequences, we want to count the number of P-positions where the number of counters in each pile is bounded by some number $n$. We call these sequences \textit{indexed-by-maximum}.

\begin{itemize}
\item $a_k(n)$ is the number of P-positions in the game of Nim with $k$ piles such that each pile has no more than $n$ counters.
\item $d_k(n)$ is the number of P-positions in the game of Nim with $k$ piles such that the largest pile has exactly $n$ counters.
\end{itemize}

Note that $d_k(n)$ is the sequence of first differences of $a_k(n)$, and $a_k(n)$ is the sequence of partial sums of $d_k(n)$.

Another natural way to enumerate P-positions is to bound the total number of counters in all the piles. Note that the total number of counters in P-positions is even. We call these sequences \textit{indexed-by-total}.

\begin{itemize}
\item $A_k(n)$ is the number of P-positions in the game of Nim with $k$ piles such that the total number of counters is no more than $2n$.
\item $D_k(n)$ is the number of P-positions in the game of Nim with $k$ piles such that the total number of counters is exactly $2n$.
\end{itemize}

Once again, note that $D_k(n)$ is the sequence of first differences of $A_k(n)$, and $A_k(n)$ is the sequence of partial sums of $D_k(n)$.

In our proofs, we loosely use the term ``pile'' to refer to the number of counters in the pile.

\subsection{Relationship between sequences}

Let us denote the total number of counters in a P-position $P=(p_1,p_2,\ldots,p_k)$ as $\#(P)$; that is, $\#(P)=\textstyle\sum p_i$.

Sequences $a_k(n)$ and $A_k(n)$ can bound each other due to the following lemma:

\begin{lemma}\label{thm:numberbound}
$2\max (P) \leq \#(P) \leq k \max (P)$.
\end{lemma}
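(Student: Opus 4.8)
The plan is to treat the two inequalities separately, since the upper bound is purely combinatorial while the lower bound is where the P-position structure enters. Write $m = \max(P)$ for the size of a largest pile. The upper bound $\#(P) \le k\,m$ is immediate: each of the $k$ piles contains at most $m$ counters, so $\#(P) = \sum_i p_i \le km$. This step uses nothing about $P$ being a P-position and can be dispatched in a single line.

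For the lower bound $2m \le \#(P)$, I would invoke Bouton's characterization (Theorem~\ref{thm:nim}): since $P \in \mathcal{P}$, the nim-sum $p_1 \oplus \cdots \oplus p_k$ equals $0$. Fix an index $\ell$ with $p_\ell = m$. Because XOR is its own inverse and $0$ is the identity element, deleting the chosen maximal pile leaves $\bigoplus_{i \ne \ell} p_i = p_\ell = m$; that is, the remaining $k-1$ piles have nim-sum exactly the largest pile. It then suffices to show that the ordinary sum of those remaining piles is at least their nim-sum, namely $\sum_{i\ne\ell} p_i \ge \bigoplus_{i\ne\ell} p_i = m$. Combining this with the isolated pile $p_\ell = m$ yields $\#(P) \ge m + m = 2m$.

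The key sub-step, and the main obstacle, is the general inequality $\sum_i x_i \ge \bigoplus_i x_i$ for non-negative integers. I would prove it by induction on the number of terms, reducing to the two-term case $x + y \ge x \oplus y$. That case follows from the bitwise identity $x + y = (x \oplus y) + 2(x \wedge y)$, where $\wedge$ denotes bitwise AND: ordinary addition differs from XOR by twice the carry word $x \wedge y$, which is non-negative. The inductive step then chains this with the induction hypothesis through $\sum_{i<n} x_i + x_n \ge \bigl(\bigoplus_{i<n} x_i\bigr) + x_n \ge \bigl(\bigoplus_{i<n} x_i\bigr) \oplus x_n = \bigoplus_{i\le n} x_i$.

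Finally I would record that the degenerate case $m = 0$ (the all-zero terminal position) satisfies both bounds trivially, so the argument is uniform across all $k \ge 1$; indeed for $k = 1$ the bounds force $m \le 0$, consistent with $(0)$ being the only P-position.
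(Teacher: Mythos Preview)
Your proof is correct and follows essentially the same approach as the paper: both dispatch the upper bound trivially and obtain the lower bound by showing that the remaining $k-1$ piles must sum to at least $\max(P)$, since their nim-sum equals $\max(P)$. Your formalization via the general inequality $\sum_i x_i \ge \bigoplus_i x_i$ (reduced to $x+y = (x\oplus y) + 2(x\wedge y)$) is a clean abstraction of the paper's direct bit-position argument that ``the other piles collectively have ones in each of those place values.''
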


\begin{proof}
The upper bound is obvious. To prove the lower bound, consider the place values of the ones in the binary representation of $\max (P)$. By Theorem~\ref{thm:nim}, the nim-sum is $0$, and the only way for this to occur is if the other piles collectively have ones in each of those place values. Thus, the lower bound then follows immediately since the sum of the numbers other than $\max (P)$ is at least $\max (P)$.
\end{proof}

\begin{corollary}\label{thm:bounds}
$a_k(\lfloor 2n/k \rfloor) \leq A_k(n) \leq a_k(n)$.
\end{corollary}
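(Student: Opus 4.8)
The plan is to derive Corollary~\ref{thm:bounds} directly from Lemma~\ref{thm:numberbound} by translating the pointwise bound $2\max(P) \leq \#(P) \leq k\max(P)$ into a comparison between the counting functions $a_k$ and $A_k$. The key observation is that each of $a_k(n)$ and $A_k(n)$ counts P-positions inside a nested family of regions (a cube $\{\max(P)\leq n\}$ for $a_k$, a simplex $\{\#(P)\leq 2n\}$ for $A_k$), so set containment of these regions immediately yields the inequalities on cardinalities. Both sequences are monotone nondecreasing in $n$, which is what makes the containment argument go through.

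First I would prove the upper bound $A_k(n)\leq a_k(n)$. Take any P-position $P$ counted by $A_k(n)$, i.e.\ with $\#(P)\leq 2n$. By the lower bound in Lemma~\ref{thm:numberbound} we have $2\max(P)\leq \#(P)\leq 2n$, hence $\max(P)\leq n$, so $P$ is counted by $a_k(n)$. Thus the set counted by $A_k(n)$ is a subset of the set counted by $a_k(n)$, giving $A_k(n)\leq a_k(n)$.

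Next I would prove the lower bound $a_k(\lfloor 2n/k\rfloor)\leq A_k(n)$. Take any P-position $P$ counted by $a_k(\lfloor 2n/k\rfloor)$, i.e.\ with $\max(P)\leq \lfloor 2n/k\rfloor$. By the upper bound in Lemma~\ref{thm:numberbound} we have $\#(P)\leq k\max(P)\leq k\lfloor 2n/k\rfloor\leq 2n$, so $P$ is counted by $A_k(n)$. Hence the set counted by $a_k(\lfloor 2n/k\rfloor)$ is contained in the set counted by $A_k(n)$, giving the inequality.

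The argument is almost entirely bookkeeping once Lemma~\ref{thm:numberbound} is in hand, so there is no serious obstacle. The one place to be careful is the floor in the lower bound: I want the cleanest statement, so I use $k\lfloor 2n/k\rfloor\leq 2n$ rather than attempting to replace $\lfloor 2n/k\rfloor$ by $2n/k$, which need not be an integer and so would not correspond to a value of the integer-indexed sequence $a_k$. The monotonicity of $a_k$ guarantees that flooring only weakens the bound in the safe direction, so the stated inequality is the correct (and essentially sharp) consequence of the pointwise estimate.
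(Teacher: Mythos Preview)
Your proof is correct and follows essentially the same approach as the paper: both inequalities are obtained by translating the pointwise bounds of Lemma~\ref{thm:numberbound} into set containments between the regions $\{\#(P)\leq 2n\}$ and $\{\max(P)\leq m\}$. Your handling of the floor is slightly more explicit than the paper's, but the argument is the same.
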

\begin{proof}
The sequence $A_k(n)$ enumerates P-positions with no more than $2n$ counters. These positions cannot have more than $n$ counters in any pile by Lemma~\ref{thm:numberbound}, so they all are included in the enumeration corresponding to $a_k(n)$. Further, every position with the maximum pile of no more than $2n/k$ must have a sum of less than $2n$. Thus, P-positions that are counted by $a_k(\lfloor 2n/k \rfloor)$ are all included in the count of $A_k(n)$.
\end{proof}

In sections below we provide recursive formulae for sequences $a$, $d$, $A$, and $D$. For indexes of the form $2^m-1$ the formula for $a_k$ is particularly simple.

\begin{lemma}\label{thm:2m-1}
$a_k(2^m-1)=2^{m(k-1)}$.
\end{lemma}

\begin{proof}
There are $2^m$ choices for each of the first $k-1$ piles, for a total of $2^{m(k-1)}$ choices. By Corollary~\ref{thm:ppos}, the last pile is uniquely determined, and since no pile is greater than $2^m - 1$, which is the largest number that has $m$ digits in binary, the last pile will also not be greater than $2^m - 1$.
\end{proof}

Together with Corollary~\ref{thm:bounds}, this generates the following corollary,

\begin{corollary}
$$2^{(k-1)\lfloor\log_2(\lfloor 2n/k \rfloor+1)\rfloor} \leq a_k(\lfloor 2n/k \rfloor) \leq A_k(n) \leq a_k(n) \leq 2^{(k-1)\lceil\log_2(n+1)\rceil}.$$
\end{corollary}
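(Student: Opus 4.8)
The plan is to notice that the two inner inequalities, $a_k(\lfloor 2n/k\rfloor)\le A_k(n)\le a_k(n)$, are exactly the content of Corollary~\ref{thm:bounds}, so nothing new is required in the middle. What remains is to establish the two outer bounds, and for both I would rely on the exact evaluation $a_k(2^m-1)=2^{m(k-1)}$ from Lemma~\ref{thm:2m-1} together with the monotonicity of $a_k$. The monotonicity is evident but worth stating explicitly, since it is the mechanism that transports the exact values of Lemma~\ref{thm:2m-1} to arbitrary indices: because $a_k(n)$ counts P-positions all of whose piles are at most $n$, enlarging $n$ can only enlarge the set being counted, so $a_k$ is non-decreasing in its argument.

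For the rightmost inequality I would choose the smallest $m$ with $2^m-1\ge n$. The condition $2^m\ge n+1$ forces $m=\lceil\log_2(n+1)\rceil$, and then monotonicity followed by Lemma~\ref{thm:2m-1} gives $a_k(n)\le a_k(2^m-1)=2^{m(k-1)}=2^{(k-1)\lceil\log_2(n+1)\rceil}$. For the leftmost inequality I would set $N=\lfloor 2n/k\rfloor$ and choose the largest $m$ with $2^m-1\le N$; the condition $2^m\le N+1$ forces $m=\lfloor\log_2(N+1)\rfloor$, and monotonicity in the other direction with Lemma~\ref{thm:2m-1} gives $a_k(N)\ge a_k(2^m-1)=2^{m(k-1)}=2^{(k-1)\lfloor\log_2(N+1)\rfloor}$. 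Chaining these two bounds around Corollary~\ref{thm:bounds} closes the argument.

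I do not anticipate a genuine obstacle here; the only point demanding care is matching the floor and the ceiling of the logarithm to the correct choice of $m$ — the largest power of two lying below the index for the lower bound, and the smallest power lying above it for the upper bound. The boundary cases, where $n+1$ or $N+1$ is itself a power of two, are harmless: there the floor and ceiling agree with $\log_2$ exactly, and the corresponding bound becomes tight, which is a useful sanity check on the direction of each estimate.
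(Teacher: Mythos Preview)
Your argument is correct and matches the paper's approach: the paper simply states that the corollary follows from Corollary~\ref{thm:bounds} together with Lemma~\ref{thm:2m-1}, and your write-up supplies exactly those details, using monotonicity of $a_k$ to sandwich an arbitrary index between consecutive values of the form $2^m-1$.
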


\subsection{One or Two piles}

If there is only one pile, there is only one P-position: (0). Thus, $a_1(n) = A_1(n) = 1$ for all $n$, which is sequence A000012 in the OEIS \cite{OEIS}. Correspondingly, $d_1(0) = D_1(0)= 1$ and $d_1(n) = D_1(n) = 0$ for $n\geq 1$, which is sequence A000007.

The P-positions for the game with two piles are described by the following lemma:

\begin{lemma}\label{thm:twopiles}
A position $P$ is a P-position if and only if $P=(x,x)$ for a non-negative integer $x$.
\end{lemma}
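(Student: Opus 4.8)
The plan is to derive both directions of the equivalence directly from Bouton's characterization (Theorem~\ref{thm:nim}), which in the two-pile case states that $(p_1,p_2)$ is a P-position if and only if its nim-sum $p_1\oplus p_2$ equals $0$. The whole argument then reduces to the single observation that the nim-sum of a non-negative integer with itself vanishes.

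For the reverse implication I would start with a position of the form $P=(x,x)$ and simply compute its nim-sum. Writing $x=(b_j\dotsc b_1)_2$ in binary, each bit of $x\oplus x$ is $b_i+b_i\equiv 0 \pmod 2$, so every bit cancels and $x\oplus x=0$. By Theorem~\ref{thm:nim}, $P$ is therefore a P-position.

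For the forward implication I would take an arbitrary P-position $P=(p_1,p_2)$. Applying Corollary~\ref{thm:ppos} in the case $k=2$ gives $p_2=p_1$ at once, since the nim-sum of the lone remaining pile $p_1$ is just $p_1$; hence $P=(p_1,p_1)$ has the required shape. As an alternative I could invoke the cancellation property recorded just after the definition of nim-sum: from $p_1\oplus p_2=0=p_1\oplus p_1$ one cancels $p_1$ to conclude $p_2=p_1$.

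There is essentially no obstacle here, as both directions collapse to the self-inverse property of XOR; the only point requiring care is to cite the appropriate earlier result (Corollary~\ref{thm:ppos}, or the cancellation remark) rather than re-proving Bouton's theorem from scratch.
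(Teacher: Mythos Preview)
Your argument is correct: both directions follow immediately from Theorem~\ref{thm:nim} (or equivalently Corollary~\ref{thm:ppos} with $k=2$) together with the self-inverse property $x\oplus x=0$. Note that the paper itself states Lemma~\ref{thm:twopiles} without proof, treating it as an immediate consequence of Bouton's characterization, so your write-up actually supplies more detail than the original.
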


This means that $a_2(n)= A_2(n)= n+1$, which is sequence A000027 with an initial offset of $0$. In addition, $d_2(n)=D_2(n)=1$, which is sequence A000012.

From here, the formulae for the sequences become cumbersome to express only in terms of $n$, so we define $n = 2^b - 1 + c$, where $b = \lfloor\log_2n\rfloor$, and $1\le c = n+1- 2^{\lfloor\log_2n\rfloor} \le 2^b$. In other words, $b$ is the number of digits in the binary representation of $n$, and $c-1$ is $n$ without the first digit.

\section{Indexed-by-maximum Sequences}\label{sec:max}

\subsection{Three Piles}\label{sec:three}

Consider the set of P-positions in a game of Nim with three piles. We want to find a formula for the number of such P-positions with maximum pile(s) equal $n$.

\begin{theorem}\label{thm:threed}
For $n>0$, $d_3(n) = 6c - 3$.
\end{theorem}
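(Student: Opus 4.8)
The plan is to reduce the count to ordered triples and exploit the parity structure of nim-sum-zero triples. By Corollary~\ref{thm:ppos}, a three-pile P-position is determined by $(p_1,p_2,p_3)$ with $p_1\oplus p_2\oplus p_3=0$, so $d_3(n)$ is exactly the number of ordered triples $(p_1,p_2,p_3)$ of non-negative integers with nim-sum $0$ and $\max(p_1,p_2,p_3)=n$. The crucial structural observation is that, since the nim-sum is $0$, at every binary position the number of the three coordinates having a $1$ there is even, hence either $0$ or $2$ (Theorem~\ref{thm:nim}). I would apply this to the leading bit of $n$, namely position $b=\lfloor\log_2 n\rfloor$: no coordinate can have a set bit above position $b$ (that would force a coordinate exceeding $n$), and at least one coordinate equals $n$ and so carries bit $b$; by the parity fact exactly two of the three coordinates carry bit $b$. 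Consequently one coordinate has bit $b$ clear and is therefore strictly less than $2^b\le n$, while the two remaining ``large'' coordinates both lie in $[2^b,2^{b+1})$.

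First I would set up the partition: choose which of the three positions holds the small coordinate, giving $3$ disjoint cases that together cover every valid triple (the choice is genuine because exactly one coordinate lacks bit $b$). In each case the small coordinate is forced as the nim-sum of the two large ones, so it suffices to count ordered pairs of large coordinates. Since the small coordinate is automatically below $2^b\le n$, the condition $\max=n$ becomes $\max$ of the two large coordinates equal to $n$. Writing each large coordinate as $2^b+p_i'$ with $p_i'\in\{0,\dots,2^b-1\}$ and recalling $n=2^b-1+c$ (so $n-2^b=c-1$), the constraints $p_i\le n$ and ``$\max=n$'' translate exactly into $p_i'\le c-1$ and $\max(p_1',p_2')=c-1$. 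I would then count ordered pairs in $\{0,\dots,c-1\}^2$ with maximum equal to $c-1$ by inclusion: $c^2-(c-1)^2=2c-1$.

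Combining the $3$ positional choices with the $2c-1$ large-coordinate pairs yields $d_3(n)=3(2c-1)=6c-3$, and I would confirm consistency on small cases ($n=1,2$ give $c=1$ and $d_3=3$; $n=3$ gives $c=2$ and $d_3=9$). I expect the main obstacle to be the structural step rather than the arithmetic: one must argue carefully that exactly two coordinates carry the leading bit and that the resulting three-way split is a true partition with neither omission nor overcounting (in particular that the degenerate configuration, the permutations of $(n,n,0)$, is correctly subsumed as the case $p_1'=p_2'=c-1$). Everything after that reduction is a routine two-variable count.
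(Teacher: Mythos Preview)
Your proof is correct and rests on the same structural insight as the paper's: since the nim-sum is zero and $\max=n$ has bit $b$ set, exactly two of the three coordinates carry bit~$b$ and the third is the ``small'' one below $2^b$. The only organizational difference is that the paper partitions by how many piles equal $n$ (separating the three permutations of $(n,n,0)$ and then counting $3\cdot 2\cdot(c-1)$ remaining positions), whereas you partition by which coordinate is the small one and absorb the $(n,n,0)$ case into the inclusion count $c^2-(c-1)^2=2c-1$; your packaging is slightly cleaner in that it avoids the explicit case split, but the two arguments are otherwise the same.
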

\begin{proof}
There are three P-positions that are permutations of $(n, n, 0)$. All other P-positions have exactly one pile of $n$. However, in order for the nim-sum to be $0$, one of the other piles must be at least $2^b$, and then the last pile is uniquely defined. There are $3$ choices for which pile has $n$ counters, $2$ choices for the pile that is at least $2^b$, and $c-1$ choices for the number of counters in this pile. This results in a total of $3 + 6(c-1) = 6c-3$ P-positions.
\end{proof}

In other words, $d_3(n) = 6(n+1- 2^{\lfloor\log_2n\rfloor})-3$. It is now sequence A241717 in the OEIS~\cite{OEIS}: 1, 3, 3, 9, 3, 9, 15, 21, 3, 9, 15, 21, 27, 33, 39, 45, 3, 9, $\ldots$.

If we arrange the numbers into a triangle as follows, the fractal-like behavior of the sequence can be seen:

\vspace{2mm}
3, 

3, 9,

3, 9, 15, 21, 

3, 9, 15, 21, 27, 33, 39, 45, 

3, 9, 15, 21, 27, 33, 39, 45, 51, 57, 63, 69, 75, 81, 87, 93,

\hspace{5cm}$\vdots$

The length of each line is a power of 2, and each line converges to A016945---the sequence $6n+3$. 

\begin{theorem}\label{thm:threea}
For $n > 0$, $a_3(n) = 2^{2b} + 3c^2$.
\end{theorem}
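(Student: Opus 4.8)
The plan is to exploit the fact, noted right after the definitions, that $a_3$ is the sequence of partial sums of $d_3$, and to anchor those partial sums at the indices $2^b-1$ where Lemma~\ref{thm:2m-1} already gives a clean closed form. Concretely, I would write, for $n$ in the block $2^b \le n \le 2^{b+1}-1$ (equivalently $n = 2^b-1+c$ with $1 \le c \le 2^b$),
\[
a_3(n) = a_3(2^b-1) + \sum_{c'=1}^{c} d_3\bigl(2^b-1+c'\bigr).
\]
The first term is handled immediately by Lemma~\ref{thm:2m-1} with $k=3$, $m=b$, giving $a_3(2^b-1) = 2^{2b}$; this also covers the base point $n=0$ (the case $b=0$, $c=0$ region) since $a_3(0)=1=2^0$.

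The crux is then to evaluate the block sum using Theorem~\ref{thm:threed}. The key observation I would verify first is an index-alignment claim: for each $c'$ with $1 \le c' \le c$, the integer $2^b-1+c'$ lies in $[2^b, 2^{b+1}-1]$, so its binary length is $b+1$, $\lfloor\log_2(2^b-1+c')\rfloor = b$, and its own $c$-parameter (in the sense $n+1-2^{\lfloor\log_2 n\rfloor}$) equals exactly $c'$. Hence Theorem~\ref{thm:threed} applies termwise as $d_3(2^b-1+c') = 6c'-3$, and the sum telescopes by the standard arithmetic-series evaluation:
\[
\sum_{c'=1}^{c}(6c'-3) = 6\cdot\frac{c(c+1)}{2} - 3c = 3c^2 + 3c - 3c = 3c^2.
\]
Adding the two contributions yields $a_3(n) = 2^{2b} + 3c^2$, as claimed.

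I expect the only real obstacle to be bookkeeping around the block boundaries rather than any substantive difficulty: one must confirm that every index from $2^b$ up through $n$ shares the same $b$, so that the single formula $6c'-3$ is valid across the whole inner sum and the local parameter $c'$ genuinely runs from $1$ to $c$. I would prefer this partial-sum decomposition over a direct induction on the first differences, because an inductive step straddling the boundary (where $c$ resets from $2^b$ to $1$ and $b$ increments) would force a separate, fiddlier case analysis, whereas anchoring at $2^b-1$ via Lemma~\ref{thm:2m-1} absorbs all the completed blocks into one exact value.
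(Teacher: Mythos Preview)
Your argument is correct and complete: anchoring at $a_3(2^b-1)=2^{2b}$ via Lemma~\ref{thm:2m-1}, checking that every index in the block $[2^b,\,2^b-1+c]$ has the same $b$ and local parameter $c'$, and then summing Theorem~\ref{thm:threed} over that block to get $3c^2$ all go through exactly as you describe.

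The paper takes a different route. It also splits off the $2^{2b}$ piece (all piles below $2^b$), but instead of summing $d_3$ it counts the remaining P-positions directly: any P-position with some pile $\ge 2^b$ must have exactly two piles $\ge 2^b$, there are $3$ ways to choose which two, and $c$ values for each, giving $3c^2$ in one stroke. Your approach is cleaner bookkeeping-wise and nicely illustrates the partial-sum relationship, but it depends on Theorem~\ref{thm:threed} already being in hand. The paper's direct count is self-contained and, more importantly, is the argument that generalizes: the same ``how many piles are $\ge 2^b$'' decomposition drives the proofs for $a_4$ and for general $a_k$ in the next subsections, whereas summing the $d_k$ formulae would become unwieldy once those formulae themselves are recursive.
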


\begin{proof}
This statement is true for $n = 2^b - 1$ by Lemma~\ref{thm:2m-1}. Now we prove the statement for all $n = 2^b - 1 + c$. There are $2^{2b}$ P-positions such that all piles are less than $2^b$. If one of the piles is at least $2^b$, then exactly one other pile must also be at least $2^b$. The leftover pile is uniquely defined and is less than $2^b$. There are $3$ ways to designate the two piles greater than or equal to $2^b$, and $c$ different choices for each of those two piles, so the total is $2^{2b} + 3c^2$ P-positions, as desired.
\end{proof}

In other words, $a_3(n) = 2^{2\lfloor\log_2n\rfloor} + 3(n+1- 2^{\lfloor\log_2n\rfloor})^2$. It is now sequence A236305 in the OEIS~\cite{OEIS}: 1, 4, 7, 16, 19, 28, 43, 64, 67, 76, 91, 112, $\ldots$. This sequence also displays fractal-like behavior, much like the previous sequence.

\subsection{Four Piles}\label{sec:four}
When there are four piles, we cannot apply the argument used in Theorem~\ref{thm:threea} because there is a possibility that all four piles have more than $2^b - 1$ counters, and we need to make sure that each of them does not exceed $n$. However, a slight modification of our argument shows that we can find a recursive formula:

\begin{theorem}\label{thm:foura}
For $n>0$, $a_4(n) = 2^{3b} + 6c^2 2^b + a_4(c-1)$.
\end{theorem}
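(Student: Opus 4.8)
The plan is to classify the P-positions counted by $a_4(n)$ according to the leading binary digit of each pile. Writing $n = 2^b - 1 + c$ as in the setup, I would call a pile \emph{high} if it lies in $[2^b, n]$, so that its $2^b$-bit is $1$ and its remaining bits encode a value in $[0, c-1]$ (giving $c$ admissible values), and \emph{low} if it lies in $[0, 2^b - 1]$, so that its $2^b$-bit is $0$ (giving $2^b$ values). By Corollary~\ref{thm:ppos} a four-pile P-position is exactly a tuple whose four piles nim-sum to $0$; comparing the $2^b$-bits then forces the number of high piles to be even. Since there are four piles, the possibilities are $0$, $2$, or $4$ high piles, and I would treat these three cases separately.

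When there are no high piles, every pile lies in $[0, 2^b - 1]$, so this count is precisely the number of four-pile P-positions with all piles at most $2^b - 1$, which is $a_4(2^b - 1) = 2^{3b}$ by Lemma~\ref{thm:2m-1} (taking $m = b$, $k = 4$). When all four piles are high, each pile's $2^b$-bit is $1$ and these cancel in the nim-sum, so the condition reduces to the lower-order bits nim-summing to $0$ while each lies in $[0, c-1]$; this is exactly the count of four-pile P-positions with all piles at most $c-1$, namely $a_4(c-1)$. This recursive term is the genuinely new feature of the four-pile case, impossible when there are only three piles, and I expect recognizing it cleanly as $a_4(c-1)$ to be the conceptual crux.

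For the middle case of exactly two high piles, I would first choose which two of the four piles are high, giving $\binom{4}{2} = 6$ choices. The two $2^b$-bits of the high piles cancel, so the requirement is that the lower-order bits of all four piles nim-sum to $0$. I would then freely select the lower bits of one high pile ($c$ choices), of the other high pile ($c$ choices), and of one of the low piles ($2^b$ choices); the lower bits of the remaining low pile are then forced by the nim-sum. The key verification is that this forced value automatically lies in the admissible range $[0, 2^b - 1]$: since all three freely chosen values are below $2^b$ and hence have at most $b$ binary digits, their nim-sum also has at most $b$ digits and so stays below $2^b$, imposing no extra constraint. This case therefore contributes $6 c^2 2^b$ positions.

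Summing the three cases yields $a_4(n) = 2^{3b} + 6 c^2 2^b + a_4(c-1)$. The main obstacle is not any single computation but ensuring that in each case the pile determined by Corollary~\ref{thm:ppos} genuinely respects the bound $n$; this is automatic in the two-high and all-high cases by the bit-length argument above, and it is exactly the all-high possibility that blocks the closed form available for three piles and forces a recursion instead.
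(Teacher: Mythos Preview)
Your proof is correct and follows essentially the same approach as the paper: both classify P-positions by the number of piles lying in $[2^b,n]$ (necessarily $0$, $2$, or $4$), invoke Lemma~\ref{thm:2m-1} for the all-low case, count the two-high case as $6c^2\cdot 2^b$ via free choice of three piles, and recognize the all-high case as $a_4(c-1)$ after stripping the $2^b$-bit. Your writeup is slightly more explicit in verifying that the forced low pile stays below $2^b$, but the decomposition and counting are identical.
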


\begin{proof}
Suppose that all of the piles are not greater than $2^b - 1$. Similar to the argument in Theorem~\ref{thm:threea}, the first three piles that are not less than $2^b$ uniquely define a P-position where all the piles are not less than $2^b$. There are $2^{3b}$ such positions. 

In addition to that, we can have either $2$ or $4$ piles that are greater than or equal to $2^b$. If there are $2$ such piles, we can choose them in $6$ different ways, and each of those piles can be any of $c$ possible numbers. We can then choose another pile in $2^b$ ways, and the last pile will thus be fixed and less than $2^b$. This accounts for the total of $6\cdot 2^bc^2$ ways. If there are $4$ piles that are greater than $2^b-1$, we can remove $2^b$ counters from each pile without changing the nim-sum, thus reducing this situation to one when all piles are no greater than $c-1$, which can be done in $a_4(c-1)$ ways.
\end{proof}

The sequence $a_4(n)$ is now sequence A241522: 1, 8, 21, 64, 89, 168, 301, 512, 561, 712, $\ldots$.

\begin{corollary}\label{thm:fourd}
For $n>0$, $d_4(n) = (12c-6)2^b + d_4(c-1)$.
\end{corollary}

\begin{proof}
We can either use a similar argument as before or the fact that this sequence is the first difference sequence of the sequence above.
\end{proof}

The sequence $d_4(n)$ is now sequence A241718: 1, 7, 13, 43, 25, 79, 133, 211, 49, 151, 253, $\ldots$.

\subsection{Many Piles}\label{sec:many}

We will now prove a more general formula for $a_k(n)$ based on the parity of $k$.

\begin{theorem}
If $k$ is odd, $a_k(n) = \dfrac{(2^b + c)^k + (2^b - c)^k}{2^{b+1}}$, for $n>0$.
\end{theorem}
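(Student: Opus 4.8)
The plan is to count P-positions with all piles bounded by $n = 2^b - 1 + c$ by classifying them according to how many piles exceed the threshold $2^b - 1$. The key structural fact, which generalizes the arguments in Theorems~\ref{thm:threea} and~\ref{thm:foura}, is that among the $k$ piles, the number of piles that are $\ge 2^b$ must be \emph{even}: looking at the $2^b$ place value (the leading bit), the nim-sum condition forces an even count of piles with a $1$ in that position, and a pile is $\ge 2^b$ precisely when it has a $1$ there. So I would stratify the count by $2j$, the number of ``large'' piles, where $0 \le 2j \le k$.

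For a fixed choice of which $2j$ piles are large, each large pile contributes a value in $\{2^b, \dots, 2^b - 1 + c\}$, i.e.\ $c$ choices, written as $2^b + r_i$ with $0 \le r_i \le c-1$; each of the remaining $k - 2j$ small piles contributes a value in $\{0, \dots, 2^b - 1\}$, i.e.\ $2^b$ choices. By Corollary~\ref{thm:ppos} one pile is determined by the rest, but the cleaner bookkeeping (as in the proof of Theorem~\ref{thm:threea}) is that once we fix the ``type'' of each pile, the free piles can be chosen independently and the constraint reduces the effective count: with $2j$ large piles, we may subtract $2^b$ from each, reducing to a configuration of $k$ piles bounded by $c-1$ in which those $2j$ piles originally-large piles are now just constrained to be $\le c-1$. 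I expect the slick route is to avoid an explicit recursion and instead recognize the binomial structure directly. The main step is to show that the total count equals
\begin{equation}
a_k(n) = \frac{1}{2} \sum_{j=0}^{k} \binom{k}{j} (2^b)^{k-j} \bigl( c^j + (-c)^j \bigr) \Big/ \; ?,
\end{equation}
which is exactly the device behind the stated closed form: the factor $\tfrac{1}{2}\bigl((2^b+c)^k + (2^b-c)^k\bigr)$ is the binomial expansion that retains only the \emph{even} powers of $c$, i.e.\ the terms with an even number of large piles. So I would expand $(2^b + c)^k + (2^b - c)^k = \sum_{j}\binom{k}{j}(2^b)^{k-j}c^j(1 + (-1)^j)$, note the odd-$j$ terms cancel, and the surviving term $\binom{k}{j}(2^b)^{k-j}c^j$ for even $j = 2i$ should match the count of P-positions with exactly $2i$ large piles.

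The crux is to verify this term-by-term matching, and in particular to see the division by $2^{b+1}$ correctly. The factor $2^{-b}$ reflects the determined pile: in a P-position with $k$ piles, one pile's value (including whether it is large) is forced by the other $k-1$, so the naive product overcounts by the $2^b$ choices one would have assigned to a free small pile, or equivalently the binomial term $\binom{k}{2i}(2^b)^{k-2i}c^{2i}$ counts ordered assignments with one redundant pile that contributes a spurious factor of $2^b$. The extra factor of $2$ in $2^{b+1}$ is precisely the $\tfrac12$ that kills the odd terms. The \textbf{main obstacle} I anticipate is making this identification airtight rather than merely plausible: one must argue carefully that fixing the large/small type of $k-1$ piles determines the type and value of the $k$-th pile consistently (so no double counting or undercounting across strata occurs), and that the parity of large piles is forced rather than assumed. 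I would handle this by an explicit generating-function or direct bijective argument on the leading bit, treating the odd-$k$ hypothesis as what guarantees the determined pile behaves uniformly; the base case $c = 2^b$ (equivalently $n = 2^{b+1}-1$) should reconcile with Lemma~\ref{thm:2m-1}, giving a useful sanity check that the formula yields $2^{b(k-1)}\cdot\text{(correct power)}$.
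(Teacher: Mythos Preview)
Your plan is exactly the paper's approach---stratify by the number $2i$ of piles that are $\ge 2^b$, count each stratum, and recognize the sum as the even-term half of a binomial expansion---but you stop short at precisely the step the paper resolves cleanly, and your displayed equation with the trailing ``$/\ ?$'' reflects that. The resolution of your ``main obstacle'' is this: since $k$ is odd, every stratum has $2i \le k-1$, so there is always at least one \emph{small} pile. Fix which $2i$ piles are large ($\binom{k}{2i}$ ways), choose each large pile freely in $\{2^b,\dots,n\}$ ($c$ choices each), choose $k-2i-1$ of the small piles freely in $\{0,\dots,2^b-1\}$ ($2^b$ choices each), and let the remaining small pile be determined by Corollary~\ref{thm:ppos}. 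That determined pile is automatically $<2^b$, because the XOR of the leading ($2^b$-place) bits of the other $k-1$ piles is $0$ (an even number of them carry a $1$ there), so its leading bit is $0$. Hence the stratum contributes exactly $\binom{k}{2i}(2^b)^{k-2i-1}c^{2i}$, with no consistency issue to worry about. Summing over $i$ and multiplying through by $2^b$ gives $\sum_{i}\binom{k}{2i}(2^b)^{k-2i}c^{2i}=\tfrac12\bigl((2^b+c)^k+(2^b-c)^k\bigr)$, and dividing back by $2^b$ yields the claim. Your detour through ``subtract $2^b$ from each large pile and reduce to $a_k(c-1)$'' is unnecessary here; that recursive device is precisely what the paper invokes for \emph{even} $k$, where the stratum $2i=k$ does occur and cannot be handled by the argument above.
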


\begin{proof}
Suppose that $2i$ of the piles are at least $2^b$. There are $\dbinom{k}{2i}$ ways to choose which piles these are, and there are $c$ choices for each of these $2i$ piles. Of the remaining $k-2i$ piles, there are $2^b$ choices for the first $k-2i-1$ piles. The last pile will be uniquely  determined by Lemma~\ref{thm:ppos}, and its size will not exceed $2^b$. Hence, we get a total of $\dbinom{k}{2i} 2^{b(k-2i-1)}c^{2i}$ P-positions.

Since $2i$ can range from $0$ to $k-1$, we get the following formula:
\begin{align*}
a_k(n) &= \dbinom{k}{0} 2^{b(k-1)}c^{0} + \dbinom{k}{2} 2^{b(k-3)}c^{2} + \ldots + \dbinom{k}{k-1} 2^{0}c^{k-1}.
\end{align*}

We multiply both sides of this equation by $2^b$:

\begin{align*}
2^b a_k(n) &= \dbinom{k}{0} 2^{bk}c^{0} + \dbinom{k}{2} 2^{b(k-2)}c^{2} + \ldots + \dbinom{k}{k-1} 2^{b}c^{k-1}.
\end{align*}

Since
\begin{align*}
(2^b \pm c)^k &= \dbinom{k}{0} 2^{bk}c^{0} \pm \dbinom{k}{1} 2^{b(k-1)}c^{1} + \dbinom{k}{2} 2^{b(k-2)}c^{2} \pm \ldots,
\end{align*}
we have that
\begin{align*}
2^b a_k(n) &= \dfrac{(2^b + c)^k + (2^b - c)^k}{2}
\end{align*}
and
\begin{align*}
a_k(n) &= \dfrac{(2^b + c)^k + (2^b - c)^k}{2^{b+1}},
\end{align*}
as desired.
\end{proof}

Note that if $k=3$, we get $a_3(n)=2^{2b}+3c^2$ as expected. If $k=5$, we get $a_5(n)=2^{4b}+10\cdot2^{2b}c^2+5c^4$. This is now sequence A241523: 1, 16, 61, 256, 421, 976, 2101, 4096, 4741, $\ldots$.

We can calculate $d_k(n)$ for odd $k$ in a similar manner, or by subtracting consecutive terms:

\begin{theorem}
If $k$ is odd, 
$$d_k(n) = \dfrac{(2^b + c)^k + (2^b - c)^k-(2^b + c-1)^k-(2^b - c+1)^k}{2^{b+1}}.$$
\end{theorem}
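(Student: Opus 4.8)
The plan is to derive the formula for $d_k(n)$ directly from the closed form for $a_k(n)$ established in the preceding theorem, using the fact stated in Section~\ref{sec:sequences} that $d_k(n)$ is the sequence of first differences of $a_k(n)$. Concretely, $d_k(n) = a_k(n) - a_k(n-1)$, so the entire computation reduces to evaluating $a_k$ at two consecutive arguments and subtracting.

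The key step is to understand how the parameters $b$ and $c$ change when $n$ is replaced by $n-1$. Recall the decomposition $n = 2^b - 1 + c$ with $1 \le c \le 2^b$. First I would handle the generic case $c \ge 2$: here decreasing $n$ by one keeps $b$ fixed and sends $c \mapsto c-1$. Applying the odd-$k$ formula, $a_k(n-1) = \bigl((2^b + (c-1))^k + (2^b - (c-1))^k\bigr)/2^{b+1}$, and subtracting from $a_k(n)$ yields exactly the claimed expression, since $(c-1)$ appears as $c-1$ in the first subtracted term and $-(c-1) = -c+1$ in the second. This is the one-line heart of the argument.

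The subtlety I would flag is the boundary case $c = 1$, i.e. $n = 2^b - 1$, where passing to $n-1 = 2^b - 2$ does \emph{not} simply decrement $c$: instead $n-1$ still has $b$ binary digits with $c$-value equal to $2^b - 1$ (using $b-1$ in the role of the exponent is what actually happens, since $2^b - 2 = 2^{b-1} - 1 + (2^{b-1}-1)$). I would check that the algebraic formula nonetheless gives the right answer at these points by direct substitution, observing that the difference formula is continuous in the parameters across the power-of-two boundaries precisely because Lemma~\ref{thm:2m-1} pins down $a_k(2^b-1) = 2^{b(k-1)}$, which one verifies the general formula reproduces when $c = 2^b$. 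The main obstacle, then, is not the generic algebra but confirming that these boundary transitions are handled consistently; once that bookkeeping is done, the formula follows uniformly for all $n > 0$. Alternatively, as the theorem statement suggests, one could rederive $d_k(n)$ from scratch by a counting argument parallel to the $a_k$ proof, subtracting the positions whose maximum pile is strictly less than $n$, but the first-difference route is cleaner and reuses the work already completed.
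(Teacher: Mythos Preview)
Your approach is exactly the one the paper indicates: it simply says ``We can calculate $d_k(n)$ for odd $k$ in a similar manner, or by subtracting consecutive terms,'' and you carry out the second option in detail. The generic case $c\ge 2$ is handled correctly and is indeed the heart of the matter.

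There is, however, an arithmetic slip in your boundary discussion. The case $c=1$ corresponds to $n = 2^b - 1 + 1 = 2^b$, not $n = 2^b - 1$; consequently $n-1 = 2^b - 1$, not $2^b - 2$. With the correct values, $n-1 = 2^b - 1$ has parameters $b' = b-1$ and $c' = 2^{b-1}$, so by the odd-$k$ formula (or directly by Lemma~\ref{thm:2m-1}) $a_k(n-1) = 2^{b(k-1)} = 2\cdot (2^b)^k / 2^{b+1}$. Subtracting from $a_k(2^b) = \bigl((2^b+1)^k + (2^b-1)^k\bigr)/2^{b+1}$ gives precisely the claimed expression at $c=1$, since then $2^b + c - 1 = 2^b - c + 1 = 2^b$. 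So your plan for the boundary is right; only the bookkeeping needed correcting.
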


For example, if $k=5$, $d_5(n)=10\cdot 2^{2b}(2c-1) + 20c^3-30c^2+20c-5$. This sequence is now sequence A241731: 1, 15, 45, 195, 165, 555, 1125, 1995, 645, $\ldots$.

\begin{theorem}
If $k$ is even, $a_k(n) = \dfrac{(2^b + c)^k + (2^b - c)^k - 2c^k}{2^{b+1}} + a_k(c-1)$, for $n>0$.
\end{theorem}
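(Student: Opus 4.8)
The plan is to mimic the counting used in the odd-$k$ theorem, partitioning P-positions according to how many piles are ``large'' (at least $2^b$), but to isolate the one case that parity no longer forbids. As in the odd case, every pile that is $\geq 2^b$ contributes a $1$ in the $b$-th binary digit, so in order for the nim-sum to vanish the number of large piles must be \emph{even}, say $2i$. For a fixed choice of which $2i$ piles are large, there are $\binom{k}{2i}$ ways to select them and $c$ choices ($2^b,\dots,2^b-1+c$) for each; among the remaining $k-2i$ small piles one assigns the first $k-2i-1$ freely ($2^b$ choices each) and lets the last be determined by Corollary~\ref{thm:ppos}, which keeps it below $2^b$. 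This yields $\binom{k}{2i}2^{b(k-2i-1)}c^{2i}$ positions, exactly as before.

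The decisive new feature is that for even $k$ the index $2i$ can reach $k$, i.e.\ \emph{all} piles may be large; this is precisely the obstruction flagged before Theorem~\ref{thm:foura}. In that case the ``last pile determined below $2^b$'' bookkeeping breaks down, so I would handle it separately by the same reduction used for four piles: write $p_j = 2^b + q_j$ with $0\le q_j\le c-1$. Since there are an even number of large piles, the $2^b$ contributions cancel in the nim-sum, so the $p_j$ form a P-position if and only if the $q_j$ do; hence the number of all-large P-positions with every pile $\le n$ is $a_k(c-1)$. Thus
\begin{equation*}
a_k(n) = \sum_{i=0}^{k/2-1}\binom{k}{2i}2^{b(k-2i-1)}c^{2i} + a_k(c-1).
\end{equation*}

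The remaining work is to close the sum. Exactly as in the odd-$k$ proof, the full even-index sum $S=\sum_{i=0}^{k/2}\binom{k}{2i}2^{b(k-2i-1)}c^{2i}$ satisfies $2^bS=\tfrac12\bigl((2^b+c)^k+(2^b-c)^k\bigr)$, so $S=\bigl((2^b+c)^k+(2^b-c)^k\bigr)/2^{b+1}$. The sum I actually have is $S$ minus its top term $i=k/2$, which equals $2^{-b}c^k$; subtracting it produces the numerator correction $-2c^k$ after clearing the denominator $2^{b+1}$, giving
\begin{equation*}
\sum_{i=0}^{k/2-1}\binom{k}{2i}2^{b(k-2i-1)}c^{2i} = \frac{(2^b+c)^k+(2^b-c)^k-2c^k}{2^{b+1}},
\end{equation*}
and adding $a_k(c-1)$ yields the claimed formula. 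The only genuine subtlety is the parity argument forcing the recursive $a_k(c-1)$ term and matching it against the ``missing'' $2i=k$ binomial term; the binomial manipulation is routine and identical to the odd case.
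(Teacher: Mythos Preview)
Your proof is correct and follows essentially the same approach as the paper: partition P-positions by the number $2i$ of piles at least $2^b$, handle the case $2i=k$ recursively via the subtract-$2^b$ bijection, and recognize the remaining sum as the even-index binomial expansion minus its top term. The paper is terser, merely citing ``the same argument as above'' for the cases $2i<k$ and stating the closed form without redoing the binomial manipulation, whereas you spell out explicitly that the missing $i=k/2$ summand equals $2^{-b}c^k$ and hence accounts for the $-2c^k$ correction.
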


\begin{proof}
We can use the same argument as above for all cases except for when all of the piles are at least  $2^b$. So if we do not consider this case, there are $\dfrac{(2^b + c)^k + (2^b - c)^k - 2c^k}{2^{b+1}}$ such P-positions. Suppose, now, that all of the piles are at least $2^b$. Note that if we subtract $2^b$ from each of these piles, the nim-sum will not be changed, and now each pile is no more than $n-2^b = c-1$, so there are $a_k(c-1)$ such P-positions. So our formula is $a_k(n) = \dfrac{(2^b + c)^k + (2^b - c)^k - 2c^k}{2^{b+1}} + a_k(c-1)$.
\end{proof}

We can calculate $d_k(n)$ for even $k$ in a similar manner:

\begin{theorem}
If $k$ is even, $d_k(n)$ equals 
$$\dfrac{(2^b + c)^k + (2^b - c)^k-(2^b + c-1)^k-(2^b - c+1)^k- 2c^k+2(c-1)^k}{2^{b+1}} \allowbreak + \allowbreak d_k(c-1).$$
\end{theorem}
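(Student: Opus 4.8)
The plan is to obtain $d_k(n)$ as the first difference $d_k(n) = a_k(n) - a_k(n-1)$, exactly as the analogous $d_k$-formula for odd $k$ is obtained from its $a_k$-counterpart, and to feed in the even-$k$ formula for $a_k$ proved immediately above. Writing $n = 2^b - 1 + c$, I would first treat the generic case $c \geq 2$. Here $n-1 = 2^b - 1 + (c-1)$ has the \emph{same} $b$ with $c$ replaced by $c-1$, so both $a_k(n)$ and $a_k(n-1)$ are governed by the same branch of the even-$k$ formula:
\begin{align*}
a_k(n) &= \frac{(2^b+c)^k + (2^b-c)^k - 2c^k}{2^{b+1}} + a_k(c-1), \\
a_k(n-1) &= \frac{(2^b+c-1)^k + (2^b-c+1)^k - 2(c-1)^k}{2^{b+1}} + a_k(c-2).
\end{align*}
Subtracting term by term, the closed-form fractions combine into exactly the numerator displayed in the statement over $2^{b+1}$, while the recursive tails collapse via $a_k(c-1) - a_k(c-2) = d_k(c-1)$, which is once again the first-difference relation, now at argument $c-1$. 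This produces the claimed identity in essentially one line.

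The single delicate point is the boundary $c = 1$, i.e. $n = 2^b$, where decreasing $n$ by one crosses a power of two and the parameter $b$ drops to $b-1$, so the even-$k$ formula no longer applies uniformly to $a_k(n-1)$. Here I would instead invoke Lemma~\ref{thm:2m-1} to write $a_k(n-1) = a_k(2^b - 1) = 2^{b(k-1)}$. Substituting $c = 1$ into the statement's right-hand side (so that $2c^k = 2$, $2(c-1)^k = 0$, and $(2^b+c-1)^k = (2^b-c+1)^k = 2^{bk}$), and reading the recursive term as the base value $d_k(0) = 1$, the verification reduces to checking
\[
\frac{(2^b+1)^k + (2^b-1)^k - 2\cdot 2^{bk} - 2}{2^{b+1}} + 1 = a_k(2^b) - 2^{b(k-1)}.
\]
The key cancellation is $2\cdot 2^{bk}/2^{b+1} = 2^{b(k-1)}$, which is precisely what reconciles the ``extra'' $-2(2^b)^k$ term appearing in the numerator with the quantity $2^{b(k-1)}$ that Lemma~\ref{thm:2m-1} subtracts off. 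A short computation then confirms the displayed identity, so the single formula covers both branches.

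I expect the main obstacle to be exactly this boundary bookkeeping: ensuring the recursive term is correctly interpreted as $d_k(c-1)$ with base case $d_k(0) = 1$, and that the somewhat opaque $-2c^k + 2(c-1)^k$ contribution in the numerator is simply the honest first difference of the $-2c^k$ correction term in the $a_k$-formula (with the $0^k$ convention handled at $c=1$). Everything else is routine: subtracting two instances of the same polynomial expression in $c$. As an independent check I would note that a direct combinatorial count of P-positions whose largest pile is \emph{exactly} $n$ — mirroring the even-$k$ case split of $a_k$ into configurations with two or more piles at least $2^b$ versus all $k$ piles at least $2^b$ — recovers the same expression, but the first-difference route is the shorter one to present.
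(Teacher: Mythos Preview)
Your approach is correct and matches the paper's intended route: the paper states this theorem with no proof beyond the remark (made for the odd case just above) that one may ``calculate $d_k(n)$ \ldots\ by subtracting consecutive terms,'' and you carry out exactly this first-difference computation from the even-$k$ formula for $a_k$. Your explicit handling of the boundary $c=1$ via Lemma~\ref{thm:2m-1} and the base value $d_k(0)=1$ is a detail the paper omits entirely, and your cancellation $2\cdot 2^{bk}/2^{b+1}=2^{b(k-1)}$ checks out.
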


\section{Indexed-by-total}\label{sec:total}

We will now fix the total number of counters as $2n$. Let $\wt(n)$ denote the \textit{binary weight} of $n$; that is, the number of ones in the binary expansion of $n$.

\subsection{Three piles}\label{sec:threetotal}

\begin{theorem}\label{thm:threeD}
$D_3(n) = 3^{\wt(n)}$.
\end{theorem}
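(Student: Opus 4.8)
The plan is to analyze the nim-sum-zero condition one bit at a time and show that the count factors as a product over the binary digits of $n$. First I would recall from Theorem~\ref{thm:nim} that a triple $(p_1,p_2,p_3)$ is a P-position exactly when $p_1\oplus p_2\oplus p_3=0$. Working in binary, this is equivalent to requiring that at every bit position the number of piles having a $1$ there is even. Since there are only three piles, ``even'' forces the count to be $0$ or $2$: a bit position can never carry a $1$ in all three piles simultaneously.

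Next I would translate the total-counter constraint into these bit-level data. If a bit position of value $2^i$ has exactly two of the piles equal to $1$ there, it contributes $2\cdot 2^i = 2^{i+1}$ to $p_1+p_2+p_3$, while a position with no $1$'s contributes nothing. Letting $S$ denote the set of bit positions at which exactly two piles carry a $1$, the total is therefore $\sum_{i\in S} 2^{i+1} = 2\sum_{i\in S} 2^i$. Setting this equal to $2n$ yields $n = \sum_{i\in S} 2^i$, so $S$ must be precisely the set of positions where $n$ has a $1$ in its binary expansion. In particular $S$ is completely forced by $n$, and $|S| = \wt(n)$.

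The last step is to count the configurations compatible with this forced set $S$. At each position in $S$ we must decide which two of the three piles receive the $1$, giving $\binom{3}{2}=3$ choices; at every position outside $S$ all three piles are $0$, leaving no freedom. Because the choices at distinct bit positions are independent and jointly reconstruct the three piles, the number of P-positions with total $2n$ is $3^{|S|} = 3^{\wt(n)}$.

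I do not expect a serious obstacle. The one point deserving care is checking that the map from per-bit choices to triples $(p_1,p_2,p_3)$ is a genuine bijection onto the P-positions of total $2n$ — that is, that independent positional assignments always assemble into valid piles and that distinct assignments yield distinct triples — but this is immediate from the uniqueness of binary representations, so the argument should go through cleanly.
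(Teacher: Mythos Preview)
Your proof is correct and follows essentially the same bit-by-bit argument as the paper: each binary digit appears in zero or two piles, the set of ``two-pile'' bits is forced to be the support of $n$, and the $3$ choices per such bit multiply to $3^{\wt(n)}$. Your write-up is simply a more detailed version of the paper's proof.
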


\begin{proof}
Represent each pile as a sum of distinct powers of 2. Each power of two, $2^i$, can be present in exactly two piles, or not present at all. That means if we sum all the piles we get that $n$ is the sum of powers of two that are present in exactly two piles. For each power of two that is present in the binary representation of $n$ we can chose in 3 ways in which piles they occur, for a total of $3^{\wt(n)}$ ways.
\end{proof}

This sequence is sequence A048883: 1, 3, 3, 9, 3, 9, 9, 27, 3, 9, 9, 27, 9, 27, 27, 81, 3, 9, $\ldots$. It satisfies the following recursion: $D_3(2n)=3D_3(n)$ and $D_3(2n+1)=D_3(n+1)$.

The sequence $A_3(n)$: partial sums of $D_3(n)$ is also present in the database. It is sequence A130665: 1, 4, 7, 16, 19, 28, 37, 64, 67, 76, 85, 112, 121, 148, 175, $\ldots$. The sequence satisfies the recursion: $A_3(2n)=3A_3(n-1)+ A_3(n)$ and $A_3(2n+1)=4A_3(n)$.

We calculated this sequence and discovered that it is in the database as the sequence describing the number of cells in three branches of the \textit{Ulam-Warburton cellular automaton} (see Ulam \cite{U}, Singmaster \cite{S}, Stanley and Chapman \cite{SC}, Wolfram \cite{W}). It is amazing how the On-Line Encyclopedia of Integer Sequences allows to connect different areas of mathematics.

\subsection{Evolution of Nim}\label{sec:evolution}
A natural question that arises is that if P-positions in Nim can be enumerated by cells in an automaton, can we find a bijection between P-positions of Nim and cells in the automaton? We provide the construction in this subsection.

Call a P-position $P_1$ a \textit{parent} of a P-position $P_2$, if $\#(P_1)+2=\#(P_2)$ and $P_1$ must be different from $P_2$ in exactly two piles with the same index, by one counter in each. Correspondingly, if $P_1$ is a parent of $P_2$, we call $P_2$ a \textit{child} of $P_1$. The following lemma connects the parent-child relationship to the game.

\begin{lemma}
A parent $P_1$ can be achieved in a game from P-position $P_2$.
\end{lemma}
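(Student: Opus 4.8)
The plan is to exhibit an explicit sequence of legal Nim moves from $P_2$ to $P_1$, making essential use of the fact that a single Nim move alters only one pile while $P_1$ and $P_2$ differ in two. Write $P_2=(p_1,\ldots,p_k)$ and let $i$ and $j$ be the two indices in which $P_1$ and $P_2$ disagree. Since $\#(P_1)+2=\#(P_2)$ and the two positions differ by exactly one counter in each of piles $i$ and $j$, the parent $P_1$ must have one fewer counter than $P_2$ in both of these piles; in particular $p_i\ge 1$ and $p_j\ge 1$, so each of the two removals described below is a legal move.

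First I would remove a single counter from pile $i$ of $P_2$, reaching the intermediate position $Q=P_2-e_i=P_1+e_j$, where $e_i$ denotes one counter in pile $i$. Because $P_2\in\mathcal{P}$ and $Q$ is obtained from $P_2$ by a legal move, the defining property of P-positions (any move from $\mathcal{P}$ lands in $\mathcal{N}$, see Theorem~\ref{thm:general}) guarantees $Q\in\mathcal{N}$. From $Q$ I would then remove a single counter from pile $j$, which is legal since pile $j$ of $Q$ still contains $p_j\ge 1$ counters; this yields $Q-e_j=P_1$. As $P_1$ is assumed to be a P-position, this second move is precisely the kind of optimal move that returns the game to $\mathcal{P}$. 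Thus $P_1$ is reached from $P_2$ after exactly two moves, with a single N-position visited in between, so $P_1$ is indeed achievable in the course of play.

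The conceptual point to emphasize --- and the only place where any care is needed --- is that no single Nim move can connect $P_1$ and $P_2$, since a move touches one pile but these two positions disagree in two piles. The resolution is that the parent--child relationship spans one full round of play: the player to move from the P-position $P_2$ necessarily creates an N-position, and the opponent's winning response can be chosen to land on $P_1$. This is exactly what makes $\#(P)/2$ behave like a generation count in the automaton picture, since passing from child to parent corresponds to rewinding one such round. No nontrivial computation is required; the argument reduces to checking the legality of the two moves and invoking the P/N alternation from Theorem~\ref{thm:general}.
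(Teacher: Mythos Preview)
Your proof is correct and follows exactly the same approach as the paper: identify the two piles $i$ and $j$ where $P_1$ has one fewer counter, and remove one counter from each in successive moves. The paper's version is just terser, omitting the side observations about $Q\in\mathcal{N}$ and the impossibility of reaching $P_1$ in a single move.
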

\begin{proof}
Suppose piles $i$ and $j$ have one fewer counter in $P_1$ than in $P_2$. Then in the first move a player takes one counter from the $i$-th pile. In the next move the next player takes one counter from the $j$-th pile.
\end{proof}

The zero position: $(0,0,\ldots,0)$ does not have a parent. But any other P-position has a parent that is described by the following lemma:

\begin{lemma}
Any non-zero P-position has a parent. Each parent can be achieved by subtracting 1 from piles $i$ and $j$, where $p_i$ and $p_j$ are non-empty piles with the same number of zeros at the end of their binary representations.
\end{lemma}

\begin{proof}
The nim-sum of $p_i$ and $p_j$ should not change: $p_i \oplus p_j = (p_i-1) \oplus (p_j-1)$. This is only possible if $p_i$ and $p_j$ have the same number of zeros at the end of their binary representations since they have to regroup in the same number of places.
\end{proof}

\begin{corollary}
If there are 3 piles, each non-zero P-position has exactly one parent.
\end{corollary}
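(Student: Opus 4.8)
The plan is to reduce the statement to a counting problem about pairs of piles and then settle that count with a $2$-adic valuation argument. By the preceding lemma, every non-zero P-position has a parent, and each parent is obtained by subtracting $1$ from two non-empty piles $i$ and $j$ whose binary representations end in the same number of zeros. Distinct admissible pairs $\{i,j\}$ produce distinct triples (subtracting from $\{1,2\}$ versus $\{1,3\}$ changes different coordinates), so parents are in bijection with admissible pairs. Existence of a parent being already established, the entire content of the corollary is uniqueness: I would show that a non-zero three-pile P-position admits \emph{exactly one} pair of non-empty piles with the same number of trailing zeros.

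For the core argument, let $v(x)$ denote the number of trailing zeros in the binary expansion of a non-empty pile $x$, and let $m$ be the minimum of $v(p_i)$ over the non-empty piles. First I would observe that all three piles have $0$'s in bit positions $0,\ldots,m-1$, since every non-empty pile has valuation at least $m$ and empty piles vanish there. Now the key step: because the nim-sum is $0$, each bit position has an \emph{even} number of the three piles equal to $1$ in that position. At position $m$ at least one pile has a $1$ (the one realizing the minimum), and with only three piles an even positive count must be exactly $2$. Hence exactly two piles have a $1$ in position $m$; since their lower bits vanish, each of these two has valuation precisely $m$, while the third pile has a $0$ in position $m$ and in all lower positions, so it is either empty or has valuation strictly greater than $m$.

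This yields the conclusion directly: the two piles of valuation $m$ form one admissible pair, and no other pair of non-empty piles can have equal valuation, since any such pair would have to include the third pile, whose valuation is either undefined (empty) or strictly larger than that of both minimal piles. Therefore there is exactly one admissible pair and, by the bijection of the first paragraph, exactly one parent. The main obstacle is the parity-counting step at bit position $m$; the argument relies essentially on having three piles, where ``even, positive, and at most three'' forces the count to be exactly two. (This is also precisely the point that would fail for larger odd numbers of piles, where the minimal valuation could be shared by more than two piles.)
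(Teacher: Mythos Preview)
Your proof is correct and follows essentially the same approach as the paper: both arguments locate the least-significant bit position at which some pile has a $1$, use the nim-sum condition to force exactly two of the three piles to have a $1$ there, and conclude that this pair is the unique admissible pair. Your version is more detailed in making explicit the bijection between admissible pairs and parents and in verifying that the third pile cannot participate in any admissible pair, but the underlying idea is identical.
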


\begin{proof}
Consider the rightmost one in the binary representations of $p_1$, $p_2$ and $p_3$. This one must appear in exactly two of the representations, and so these numbers have the same number of zeroes at the end of their binary representation.
\end{proof}

Similarly, we can also describe a child.

\begin{lemma}
Each child can be achieved by adding 1 to piles $i$ and $j$, where $p_i$ and $p_j$ has the same number of ones at the end of their binary representation.
\end{lemma}

\begin{proof}
The nim-sum of $p_i$ and $p_j$ should not change: $p_i \oplus p_j = (p_i+1) \oplus (p_j+1)$. This is only possible if $p_i$ and $p_j$ have the same number of ones at the end of their binary representations since they have to regroup in the same number of places.
\end{proof}

If we play the game with 3 piles each P-position has exactly 0, 1, or 3 children.

This way we get a cellular automaton. We start with a zero position and call it alive. At each step the children of the living positions are born. Children that are born at step $n$, are called $n$-generation and $D_k(n)$ enumerates them. Similarly, $A_k(n)$ enumerates all the cells that are alive by the time $n$.

\subsection{Evolution of an impartial combinatorial game}\label{sec:evolany}

Note that we can describe an evolution of any impartial combinatorial game using the following definition. We assume that the players behave optimally. That is, if they can move to a P-position they will do so.

A P-position $P_1$ is a \textit{parent} of $P_2$ if there exists an optimal game such that $P_1$ is achieved from $P_2$ in exactly two moves in a game which takes the longest number of possible moves. 

If the longest game starting with $P_1$ takes $2n$ moves, then $n$ is the generation number of $P_1$. For Nim this definition coincides with the previous one because the longest game starting from a P-position with $2n$ counters cannot take more than $2n$ moves.

There is a standard algorithm for finding P-positions. Start with the terminal P-positions and assume they were found at step 0. Then proceed by induction. Denote the set of P-positions found at steps up to $i$ as $\mathcal{P}_i$. Denote the positions that are one move away from $\mathcal{P}_i$ as $\mathcal{N}_i$. Then the P-positions that do not belong to $\mathcal{P}_i$ and all moves from which belong to $\mathcal{N}_i$ are the P-positions from $\mathcal{P}_{i+1} \setminus \mathcal{P}_{i}$. Note that $\mathcal{P}_i \in \mathcal{P}_{i+1}$ and $\mathcal{N}_i \in \mathcal{N}_{i+1}$.

\begin{lemma}
P-positions found at step $i$ are born in generation $i$.
\end{lemma}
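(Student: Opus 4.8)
The plan is to prove the stronger statement that, for every $i\ge 0$, the set $\mathcal{P}_i$ produced by the algorithm is exactly the set of P-positions whose generation number is at most $i$. The lemma then follows by taking differences, since the positions \emph{found} at step $i$ are precisely those in $\mathcal{P}_i\setminus\mathcal{P}_{i-1}$ (with $\mathcal{P}_{-1}=\varnothing$), so equality of the two nested families forces the newly found positions at step $i$ to be exactly those of generation $i$.

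First I would make the generation number computationally tractable by recording the length of the longest optimal game in a remoteness-style function $r$: set $r(P)=0$ for a terminal P-position; for a non-terminal P-position set $r(P)=1+\max\{r(Q): P\to Q\}$ (the losing mover delays as long as possible); and for an N-position set $r(Q)=1+\min\{r(P'): Q\to P',\ P'\in\mathcal{P}\}$ (the winning mover ends the game as quickly as optimality permits). The generation number of a P-position $P$ is then $r(P)/2$, and $r(P)$ is always even for P-positions because every optimal game alternates between P- and N-positions and terminates at a terminal P-position. The feature to emphasize is that the losing player contributes a maximum and the winning player a minimum; this min/max pattern is exactly what aligns with the existential quantifier defining $\mathcal{N}_i$ (a position with \emph{some} move into $\mathcal{P}_i$) and the universal quantifier defining $\mathcal{P}_{i+1}$ (a position \emph{all} of whose moves land in $\mathcal{N}_i$).

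Next I would run the induction on $i$. The base case $i=0$ is immediate, since $\mathcal{P}_0$ consists of the terminal P-positions, which are exactly the P-positions with $r=0$. For the inductive step, assume $\mathcal{P}_{i-1}=\{P: r(P)\le 2(i-1)\}$. For the forward inclusion, if $P\in\mathcal{P}_i\setminus\mathcal{P}_{i-1}$ then $r(P)>2(i-1)$, while every move $P\to Q$ lands in $\mathcal{N}_{i-1}$, so each such $Q$ has a move to some P-position of remoteness $\le 2(i-1)$; hence $r(Q)\le 2i-1$ for all $Q$, giving $r(P)=1+\max_Q r(Q)\le 2i$, and therefore $r(P)=2i$. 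For the reverse inclusion, if $r(P)=2i$ then $\max_Q r(Q)=2i-1$, so every $Q$ reachable from $P$ satisfies $r(Q)\le 2i-1$, which forces a reachable P-position $P''$ with $r(P'')\le 2i-2$, i.e.\ $P''\in\mathcal{P}_{i-1}$; thus every move from $P$ enters $\mathcal{N}_{i-1}$, and since $r(P)=2i>2(i-1)$ gives $P\notin\mathcal{P}_{i-1}$, we get $P\in\mathcal{P}_i$. Using that $r$ takes only even values on P-positions, these two inclusions combine to $\mathcal{P}_i=\{P: r(P)\le 2i\}$ and hence $\mathcal{P}_i\setminus\mathcal{P}_{i-1}=\{P: r(P)=2i\}$, which is exactly the claim that a P-position is found at step $i$ iff its generation number is $i$.

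The step I expect to be the main obstacle is making precise the notion of the longest \emph{optimal} game so that the recursion for $r$ is justified — in particular, arguing that optimality forces the winning player to move to a P-position of minimal remoteness rather than merely to some P-position. Without this convention a single N-position able to reach both a shallow and a deep P-position would break the alignment between $r(Q)=1+\min(\cdots)$ and the defining condition of $\mathcal{N}_i$, and the correspondence between algorithmic step and generation could fail. For Nim this difficulty evaporates, since from a P-position with $2n$ counters no game can exceed $2n$ moves and the generation number is simply the half-total; but the general lemma genuinely relies on fixing the min/max convention above.
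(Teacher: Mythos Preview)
Your approach is essentially the paper's: both argue by induction on $i$ that $\mathcal{P}_i$ coincides with the P-positions of generation at most $i$, getting the upper bound from the containment of all moves out of $\mathcal{P}_i$ in $\mathcal{N}_{i-1}$ and the lower bound by exhibiting a chain $\mathcal{P}_i\setminus\mathcal{P}_{i-1}\to\mathcal{N}_{i-1}\setminus\mathcal{N}_{i-2}\to\mathcal{P}_{i-1}\setminus\mathcal{P}_{i-2}$. The one substantive difference is that you package the induction in an explicit remoteness function with a min/max convention (winner minimizes, loser maximizes), whereas the paper simply asserts that ``all optimal moves from $\mathcal{N}_i$ lead to $\mathcal{P}_i$'' after defining optimal play only as ``if they can move to a P-position they will do so.'' You are right to flag this as the crux: under the paper's weaker stated convention that assertion need not hold in a general impartial game, and your min-at-N-positions convention is precisely what makes the upper-bound direction valid. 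So your version is the same argument, but with the ambiguity in the definition of ``longest optimal game'' resolved in the only way that makes the lemma true.
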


\begin{proof}
All optimal moves from $\mathcal{N}_i$ lead to $\mathcal{P}_i$. All moves from $\mathcal{P}_i$ lead to $\mathcal{N}_{i-1}$. Thus, if there is an optimal game where the P-position $P_1$ is reached after P-position $P_2$, then $P_1$ was found at an earlier step. That means an optimal game starting with $P_1 \in \mathcal{P}_i$ can not take more than $2i$ steps.

Now suppose $P_1 \in \mathcal{P}_{i} \setminus \mathcal{P}_{i-1}$. That means there exists a move from $P_1$ to $\mathcal{N}_{i-1} \setminus \mathcal{N}_{i-2}$. Similarly, there exists a move from $\mathcal{N}_{i-1}$ to $\mathcal{P}_{i-1} \setminus \mathcal{P}_{i-2}$. That means there exists an optimal game from $P_1$ that takes $2i$ moves, so $P_1$ is born in generation $i$.
\end{proof}

\subsection{Ulam-Warburton cellular automaton}\label{sec:UW}

Now we will describe an automaton that produces the same sequences as P-positions in the game of Nim with three piles.

Consider points on an infinite square grid on the plane. Start with the point $(0,0)$ and forbid any growth in the south branch. That is, points with coordinates $(x,y)$, where $y < 0$ and $y \leq - |x|$ are not allowed to be born. At each moment a child is born if it has exactly one alive neighbor horizontally or vertically. Remark that this corresponds to three branches of the Ulam-Warburton automaton when any direction is allowed. Figure~\ref{fig:gen6} shows 6 generations of the automaton. The dots represent cells, and the dots are connected if they form a parent-child pair. The starting cell is in the bottom center.

\begin{figure}[htbp]
  \centering
    \includegraphics[scale=0.5]{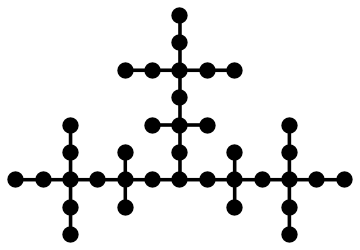}
      \caption{Ulam-Warburton automaton without the South branch after 6 generations}\label{fig:gen6}
\end{figure}

The description of points born in generation $n$ is well-known, \cite{APS, S, SC, U, W}. Suppose $n=\textstyle\sum_{j=1}^i 2^{r_j}$ for distinct integers $r_1 > r_2 > \cdots > r_i \geq 0$. Then the points that are born in generation $n$ have coordinates $\Sigma_{j=1}^i 2^{r_j} \mathbf{v}_j$, where $\mathbf{v}_j \in \{(0,1), (0,-1), (1,0), (-1,0)\}$ and $\mathbf{v}_j \neq -\mathbf{v}_{j-1}$ for $j > 1$.

We can describe three branches of this automaton in the following manner: Start in any of three directions (N, E, W) and move $2^{r_1}$ steps, then either continue forward or turn 90 degrees and move $2^{r_2}$ steps, and so on.

\begin{theorem}
The evolution graph of the game of Nim with three piles is the same as three branches of the evolution graph of the Ulam-Warburton automaton.
\end{theorem}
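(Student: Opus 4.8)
The plan is to exhibit an explicit isomorphism between the two rooted evolution trees by encoding both families of gen-$n$ objects as words of length $\wt(n)$ over a three-letter structure, and then checking that the two parent maps become the \emph{same} operation on these encodings. Write $n=\sum_{j=1}^{i}2^{r_j}$ with $r_1>\cdots>r_i\ge 0$, so $i=\wt(n)$. On the Nim side, the proof of Theorem~\ref{thm:threeD} shows that a P-position of total $2n$ is determined by choosing, for each set bit $r_j$ of $n$, which of the three piles \emph{lacks} that bit; I will record this as a word $(c_1,\dots,c_i)\in\{1,2,3\}^i$. On the Ulam--Warburton side, the characterization quoted just above the theorem says a gen-$n$ cell of the three-branch automaton is determined by its turn sequence $(\mathbf v_1,\dots,\mathbf v_i)$ with $\mathbf v_1\in\{N,E,W\}$ and $\mathbf v_j\neq-\mathbf v_{j-1}$. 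Both encodings range over $3^{\wt(n)}$ words, matching $D_3(n)=3^{\wt(n)}$.

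Next I would compute the parent map in each encoding. For Nim, the parent is obtained (as in the ``each nonzero P-position has exactly one parent'' corollary) by subtracting $1$ from the two piles carrying the lowest set bit $r_i$; in low-order bits these two piles change from $\cdots1\,0^{r_i}$ to $\cdots0\,1^{r_i}$, while the third pile, indexed by $c_i$, is unchanged. Reading off the new position at generation $n-1$, whose set bits are $r_1,\dots,r_{i-1},r_i-1,\dots,0$, the choices for $r_1,\dots,r_{i-1}$ are untouched and every one of the $r_i$ new low bits is lacked by pile $c_i$. Thus the Nim parent map sends $(c_1,\dots,c_{i-1},c_i)$ to $(c_1,\dots,c_{i-1},\underbrace{c_i,\dots,c_i}_{r_i})$. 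For the automaton, the parent of a cell is its unique earlier living neighbor; I will show this neighbor is the point reached by retracting the last leg of the branch path (direction $\mathbf v_i$, length $2^{r_i}$) by one unit, namely $\sum_{j<i}2^{r_j}\mathbf v_j+(2^{r_i}-1)\mathbf v_i$. Its turn sequence is exactly $(\mathbf v_1,\dots,\mathbf v_{i-1},\mathbf v_i,\dots,\mathbf v_i)$ with $\mathbf v_i$ repeated $r_i$ times, since the extra legs continue \emph{straight}. Hence both parent maps are the identical operation ``replace the last letter by $r_i$ copies of itself'' (a deletion when $r_i=0$).

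Finally I would define the bijection $\Psi$ from Nim words to turn sequences letter by letter: fix $\sigma\colon\{1,2,3\}\to\{N,E,W\}$ for the first letter, and for $j\ge2$ declare $\mathbf v_j=\mathbf v_{j-1}$ (straight) when $c_j=c_{j-1}$, while the two values $c_j\neq c_{j-1}$ map bijectively, via any fixed convention, to the two admissible turns of $\mathbf v_{j-1}$. Because at each step this is a bijection $\{1,2,3\}\to\{\text{straight},\text{left},\text{right}\}$ onto the three directions allowed by the no-reversal rule, $\Psi$ is a bijection of words. Crucially, $\Psi$ sends equal consecutive letters to ``straight,'' so applying $\Psi$ after the Nim parent map (which appends copies of $c_i$) produces exactly the repeated-$\mathbf v_i$ tail of the automaton parent map; that is, $\Psi$ intertwines the two parent maps and sends the empty word/origin to the empty word/origin. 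A root-preserving bijection that commutes with the parent map is an isomorphism of the evolution graphs, since in each the edges are exactly the parent--child pairs.

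The step I expect to be the main obstacle is the automaton parent computation: justifying that the unique living neighbor of a gen-$n$ cell is the one-unit retraction of its last leg. I would obtain this directly from the birth rule---a cell is born exactly when it has a single living neighbor, which must belong to the previous generation---so it suffices to check that the retracted point is a legal gen-$(n-1)$ cell (its turn word obeys $\mathbf v_1\in\{N,E,W\}$ and no reversal) and is a lattice neighbor; uniqueness is then automatic. Verifying that the retraction's turn word is the asserted repeated-letter word, and that it never strays into the forbidden south cone, is the only place where the detailed geometry of the three branches is needed.
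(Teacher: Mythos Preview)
Your approach is essentially the same as the paper's. Both proofs encode a generation-$n$ P-position (respectively, cell) by a length-$\wt(n)$ word recording, for each set bit of $n$, which pile lacks that bit (respectively, which direction the branch takes), and both define the bijection by sending ``same pile as before'' to ``continue straight'' and the two other pile-choices to the two admissible $90^\circ$ turns; the paper simply fixes the cyclic convention $\{1\to2\to3\to1\}=\text{left}$ where you allow any fixed convention. The one genuine difference is in the verification: the paper argues informally that ``paths from the root'' correspond under this dictionary, whereas you explicitly compute the parent map on each side and check that $\Psi$ intertwines them---this is cleaner, and your identification of the automaton-side parent (one-unit retraction of the last leg, hence the repeated-$\mathbf v_i$ tail) is exactly the step the paper leaves implicit. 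Your flagged obstacle is real but mild: once you invoke the cited description of generation-$n$ cells and note that the retracted point has a valid no-reversal word with $\mathbf v_1\in\{N,E,W\}$, uniqueness of the living neighbor at birth forces it to be the parent.
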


\begin{proof}
Consider a P-position in the game of Nim with three piles: $(p_1,p_2,p_3)$. If this P-position was born on step $n$, it means $p_1 \oplus p_2 \oplus p_3 = 0$ and $p_1+p_2+p_3 = 2n$. Suppose that we decompose each $p_i$ into distinct powers of 2. Then the powers of 2 that appear will be the $r_j$. Further, each $r_j$ is present in exactly two out of three piles. Now let us describe the ancestors of this P-position. Start with the zero position, then chose pile $i_1$ and $i_2$ in which the power $r_1$ is present. Add 1 to both piles, and continue adding $2^{r_1}$ times. Then move to the next power and so on. 

Analogously, with respect to the graph of the cellular automaton, chose a legal direction for each pair of piles. Pick a direction corresponding to the largest power of 2 in $n$ and make $2^{r_1}$ steps forward in this direction. Take the next power of 2. If it corresponds to the same two piles continue forward, otherwise turn 90 degrees either left or right depending on the new pair and move $2^{r_2}$ steps.

Now we want to make an explicit bijection between cells in the automaton and P-positions. To start, we identify the P-position $(0,1,1)$ with the point $(-1,0)$ and West direction, the P-position $(1,0,1)$ with the point $(0,1)$ and East direction, and the P-position $(1,1,0)$ with the point $(1,0)$ and North direction. Now we define turns:

\begin{itemize}
\item Left turn: changing direction from $(0,1,1)$ to $(1,1,0)$, from $(1,1,0)$ to $(1,0,1)$, and from $(1,0,1)$ to $(0,1,1)$
\item Right turn: changing direction from $(0,1,1)$ to $(1,0,1)$, from $(1,0,1)$ to $(1,1,0)$, and from $(1,1,0)$ to $(0,1,1)$.
\end{itemize}

Each cell in the automaton (correspondingly, P-position) has exactly one parent. The cell (P-position) is uniquely described by the path from the starting point (terminal position). We showed the bijection between the paths which establishes the bijection between the cells and the P-positions.

\end{proof}

For example, consider the P-position $(14,11,5)$, which can be decomposed into powers of 2: $(8+4+2, 8+2+1,4+1)$. This means that the evolution happens in the following way. Start with the P-position $(0,0,0)$, then 8 generations are born in the direction $(1,1,0)$ until the P-position $(8,8,0)$ is reached. After that 4 generations are born in the direction $(1,0,1)$ until the P-position $(12,8,4)$ is reached. After that 2 generations are born in the direction $(1,1,0)$ reaching $(14,10,4)$, then the child is born in the direction $(0,1,1)$ reaching the final destination $(14,11,5)$. This corresponds to the following walk on the automaton: 8 steps to the right until the coordinates $(8,0)$, turn right, 4 more steps reaching $(8,-4)$, turn left and make 2 more steps reaching $(10,-4)$, then turn left again and make one step to get to $(10,-3)$.

It is more natural to place the Nim evolution in 3D, but such a graph is more difficult to draw and to visualize; see Figure~\ref{fig:NimEvol}.

\begin{figure}[htbp]
  \centering
    \includegraphics[scale=0.5]{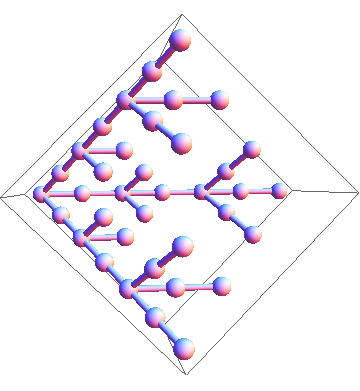}
      \caption{Nim evolution after 6 generations in 3D}\label{fig:NimEvol}
\end{figure}

\subsection{Four piles}\label{sec:fourtotal}

Let us move to four piles. The number of possible parents is 1, 2 or 6:

\begin{itemize}
\item 1: if there is one pair of binary numbers with the same number of zeros at the end,
\item 2: if there are two pairs of binary numbers such that the number of zeros at the end is the same within each pair and different for different pairs,
\item 6: all four binary numbers have the same number of zeros at the end.
\end{itemize}

Similarly, the number of possible children is 1, 2, or 6. Table~\ref{tab:cp} shows examples of P-positions with different numbers of parents and children.

\begin{table}[htbp]
\centering
\begin{tabular}{| c | c | c | c |}
  \hline
   & 1 child & 2 children & 6 children\\
  \hline                       
  1 parent & (0,1,2,3) & (0,0,1,1) & (0,0,2,2)\\
  2 parents & (0,1,4,5) & (1,1,2,2) & (2,2,4,4)\\
  6 parents & (1,3,5,7) & (1,1,3,3) & (1,1,1,1)\\ 
  \hline  
\end{tabular}
\caption{P-positions with different numbers of parents/children}\label{tab:cp}
\end{table}

Suppose the total number of counters is $2n$. We computed the sequence $D_4(n)$, which is now sequence A237711: 1, 6, 7, 36, 13, 42, 43, 216, 49, 78, 55, 252, 85, $\ldots$. Sequence $D_4(n)$ can be described recursively:

\begin{lemma}
$D_4(0)=1$, $D_4(1)=6$, $D_4(2n+1)=6D_4(n)$ for $n\geq 0$, and $D_4(2n+2)=D_4(n+1)+D_4(n)$ for $n \geq 0$
\end{lemma}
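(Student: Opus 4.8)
The plan is to adapt the power-of-two decomposition from the proof of Theorem~\ref{thm:threeD} and then read off the recursion by splitting on the least significant bit. First I would set up a combinatorial model for four-pile P-positions. A position $(p_1,p_2,p_3,p_4)$ is determined by specifying, for each bit position $i\ge 0$, the set $S_i\subseteq\{1,2,3,4\}$ of piles whose binary representation has a $1$ in place $2^i$. By Theorem~\ref{thm:nim} the nim-sum is $0$ exactly when every $S_i$ has even cardinality, so $m_i:=|S_i|\in\{0,2,4\}$. Recovering the piles via $p_j=\sum_{i:\,j\in S_i}2^i$ shows this is a bijection between four-pile P-positions and sequences $(S_i)_{i\ge 0}$ of even-size subsets, all but finitely many empty. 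Under this bijection the total counter count is $\sum_i m_i 2^i$, so the positions counted by $D_4(n)$ are exactly those with $\sum_i m_i 2^i = 2n$, i.e.\ $\sum_i (m_i/2)2^i = n$.

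Next I would use that the choices at distinct bit positions are independent: given the sizes $(m_i)$, the number of ways to pick the subsets is $\prod_i\binom{4}{m_i}$, where $\binom{4}{0}=\binom{4}{4}=1$ and $\binom{4}{2}=6$. Writing $s_i:=m_i/2\in\{0,1,2\}$, this gives the clean formula
\[
D_4(n) \;=\; \sum_{\substack{(s_i):\ s_i\in\{0,1,2\}\\ \sum_i s_i 2^i = n}}\ \prod_i w(s_i), \qquad w(0)=w(2)=1,\ \ w(1)=6,
\]
so that $D_4(n)$ enumerates base-$2$ representations of $n$ with digits in $\{0,1,2\}$, each weighted by a factor of $6$ for every digit equal to $1$. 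The base cases $D_4(0)=1$ and $D_4(1)=6$ are then immediate.

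Finally I would derive the recursion by isolating the lowest digit $s_0$. Since $\sum_{i\ge 1}s_i 2^i$ is even, $n$ and $s_0$ share the same parity, so $s_0$ is forced to equal $1$ when $n$ is odd and lies in $\{0,2\}$ when $n$ is even. For an odd index $2n+1$, the digit $s_0=1$ contributes the factor $6$, while the remaining digits $(s_1,s_2,\dots)$ range over all representations of $n$ after the index shift $\sum_{i\ge1}s_i 2^i = 2n \Leftrightarrow \sum_{i\ge1}s_i 2^{i-1}=n$; this yields $D_4(2n+1)=6\,D_4(n)$. For an even index $2n+2$, the choice $s_0=0$ leaves the higher digits representing $n+1$ and the choice $s_0=2$ leaves them representing $n$, each with weight $1$, so summing the two cases gives $D_4(2n+2)=D_4(n+1)+D_4(n)$.

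The argument is essentially routine once the model is in place, and the only point demanding care is the bijectivity in the first step: I must check that every assignment of even-size subsets $(S_i)$ corresponds to a \emph{distinct} P-position with the claimed total, rather than merely a surjection. This is the four-pile analogue of the counting in Theorem~\ref{thm:threeD} and follows from uniqueness of binary representations, so I expect it to be the main conceptual step but not a genuine obstacle; the index-shift bookkeeping is then the only place a routine slip could occur.
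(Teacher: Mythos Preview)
Your proof is correct and, at its core, uses the same idea as the paper: strip off the least significant bit of every pile and reduce. The paper phrases this concretely---a P-position with total $4n+2$ has exactly two odd piles (choose them in $6$ ways, subtract one from each, halve), while one with total $4n+4$ has either zero or four odd piles (halve directly, or subtract one from each pile and halve)---and this is precisely your split on $s_0\in\{1\}$ versus $s_0\in\{0,2\}$, since $s_0$ is half the number of odd piles. Your route is a bit more systematic: you first derive the closed form $D_4(n)=\sum\prod w(s_i)$ over $\{0,1,2\}$-digit representations of $n$, which the paper does not state explicitly, and then read off the recursion; the paper goes straight to the recursion without the intermediate formula. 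Both arguments are equally short, but your digit model makes the generalization to $k$ piles (Theorem in Section~\ref{sec:manytotal}) completely transparent.
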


\begin{proof}
Consider a P-position with total sum $4n+2$. It has exactly two odd piles. That means it can be achieved by doubling a P-position with total sum $2n$ and adding two counters in any pair of piles. The latter can be done in 6 ways. Therefore, $D_4(2n+1)=6D_4(n)$.

Now consider a P-position with total sum $4n+4$. It can have either four odd piles or four even piles. If it has four odd piles, this P-position can be achieved by doubling all the piles in a P-position with total sum $2n$ and adding one counter to each pile. If it has four even piles, this P-position can be achieved by doubling a P-position with total sum $2n+2$. Therefore, $D_4(2n+2)=D_4(n+1)+D_4(n)$.
\end{proof}

The corresponding sequence of partial sums $A_4(n)$ is now sequence A237686: 1, 7, 14, 50, 63, 105, 148, 364, 413, 491, 546, 798, 883, 1141, ... This sequence can also be described recursively:

\begin{lemma}
$A_4(0)=1$, $A_4(1)=7$, $A_4(2n+1)=7A_4(n) + A_4(n-1)$ for $n\geq 1$, and $A_4(2n+2)=7A_4(n) + A_4(n+1)$ for $n \geq 1$.
\end{lemma}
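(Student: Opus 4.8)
The plan is to derive the recursion for the partial sums $A_4(n)$ directly from the already-established recursion for the differences $D_4(n)$, namely $D_4(2n+1)=6D_4(n)$ and $D_4(2n+2)=D_4(n+1)+D_4(n)$, together with the defining relation $A_4(m)-A_4(m-1)=D_4(m)$. Since $A_4$ is the sequence of partial sums of $D_4$, I would first record the elementary telescoping identity $A_4(m)=\sum_{j=0}^{m}D_4(j)$ and plan to express $A_4(2n+1)$ and $A_4(2n+2)$ by splitting these partial sums according to the parity of the summation index, so that each half can be rewritten using the $D_4$ recursion and folded back into smaller instances of $A_4$.

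Concretely, for the odd case I would write $A_4(2n+1)=D_4(0)+\sum_{j=1}^{n}\bigl(D_4(2j-1)+D_4(2j)\bigr)$, grouping the terms into consecutive odd/even pairs. Applying $D_4(2j-1)=6D_4(j-1)$ and $D_4(2j)=D_4(j)+D_4(j-1)$ turns each pair into $7D_4(j-1)+D_4(j)$; summing over $j$ from $1$ to $n$ and reassembling the two resulting sums as shifted partial sums of $D_4$ should produce exactly $7A_4(n-1)$ from the $D_4(j-1)$ terms and a leftover $A_4(n)$-type contribution from the $D_4(j)$ terms, which after accounting for the boundary term $D_4(0)=1$ collapses to $7A_4(n)+A_4(n-1)$. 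The even case $A_4(2n+2)=A_4(2n+1)+D_4(2n+2)$ I would handle by feeding in the just-proved odd recursion and the identity $D_4(2n+2)=D_4(n+1)+D_4(n)$, then recognizing $7A_4(n)+A_4(n-1)+D_4(n+1)+D_4(n)$ and rewriting $A_4(n-1)+D_4(n)=A_4(n)$ and absorbing $D_4(n+1)$ into $A_4(n)$ to reach $7A_4(n)+A_4(n+1)$.

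I would verify the base cases separately: $A_4(0)=D_4(0)=1$ and $A_4(1)=D_4(0)+D_4(1)=1+6=7$ match the stated initial conditions, and I would sanity-check the first few recursive values (for instance $A_4(3)=7\cdot7+1=50$ and $A_4(4)=7\cdot7+A_4(2)=49+14=63$) against the listed sequence $1,7,14,50,63,\ldots$ to guard against an off-by-one error in the index shifts.

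The main obstacle I anticipate is bookkeeping precision in the index shifts: the $D_4$ recursion relates index $2n+2$ (not $2n$) to $D_4(n+1)$ and $D_4(n)$, so when I split the partial sum into parity classes I must be careful that the even-indexed terms $D_4(2),D_4(4),\ldots,D_4(2n)$ correspond to $j=1,\ldots,n$ via $D_4(2j)=D_4(j)+D_4(j-1)$ while the odd-indexed terms $D_4(1),D_4(3),\ldots$ use $D_4(2j-1)=6D_4(j-1)$, and that the boundary contributions at the top and bottom of each sum are correctly assigned to $A_4(n)$ versus $A_4(n-1)$. Getting these telescoping boundaries exactly right — rather than the algebra itself, which is routine — is where the proof could go wrong, so I would keep explicit track of the ranges of summation at each rewriting step.
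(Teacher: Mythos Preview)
Your overall strategy is sound and is a bit different from the paper's. The paper proceeds by induction on the index of $A_4$: it assumes the even recursion $A_4(2n)=7A_4(n-1)+A_4(n)$ already holds, writes $A_4(2n+1)=A_4(2n)+D_4(2n+1)$, and simplifies; then it handles $A_4(2n+2)$ exactly as you do. Your plan instead derives the odd case directly by summing the $D_4$ recursion over all indices up to $2n+1$, so no inductive hypothesis on $A_4$ is needed once the $D_4$ recursion is in hand. That is a clean alternative.

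However, your written decomposition contains precisely the off-by-one slip you warned yourself about. The sum $D_4(0)+\sum_{j=1}^{n}\bigl(D_4(2j-1)+D_4(2j)\bigr)$ runs only over indices $0$ through $2n$, so it equals $A_4(2n)$, not $A_4(2n+1)$. Carrying your algebra through on that sum yields $7A_4(n-1)+A_4(n)$, with the coefficients swapped relative to what you claim; you have in fact re-derived the even recursion at index $2n$. The fix is to include the missing top term $D_4(2n+1)=6D_4(n)$, or equivalently to group as $A_4(2n+1)=\sum_{j=0}^{n}\bigl(D_4(2j)+D_4(2j+1)\bigr)$: then the $j=0$ pair contributes $7$, each pair with $j\ge 1$ contributes $7D_4(j)+D_4(j-1)$, and the total is $7+7\bigl(A_4(n)-1\bigr)+A_4(n-1)=7A_4(n)+A_4(n-1)$ as required. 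Your treatment of the even case and the base values is correct and matches the paper.
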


\begin{proof}
We use induction to prove this statement. We can see that the base cases hold via direct computation. Now assume that the recurrence relation holds for $k\leq 2n$. By definition, $A_4(2n+1)=A_4(2n) + D_4(2n+1)$. Using the inductive hypothesis, $A_4(2n+1)=7A_4(n-1) + A_4(n) + 6D_4(n) = A_4(n-1) + 6(A_4(n-1)+D_4(n)) + A_4(n)= 7A_4(n) + A_4(n-1)$.

Similarly, by definition: $A_4(2n+2)=A_4(2n+1) + D_4(2n+2)$. Using the previous result, $A_4(2n+2)=7A_4(n) + A_4(n-1) + D_4(n)+ D_4(n+1)=7A_4(n) + A_4(n+1)$, which completes the induction.
\end{proof}

\subsection{Many piles}\label{sec:manytotal}

We now calculate these sequences for any number of piles. First, we compute the initial terms of $D_k$.

\begin{lemma}
$D_k(0)=1$, $D_k(1) = \dbinom{k}{2}$.
\end{lemma}

\begin{proof}
It is easy to see that $D_k(0) = 1$ because this is just the position $(0,\ldots,0)$. In the case of $D_k(1)$, we can choose two piles to have one counter each in $\dbinom{k}{2}$ ways, and this is the only way for this position to have a nim-sum of zero.
\end{proof}

The following theorem provides a recursive formula for $D_k(n)$.
\begin{theorem} Assuming $D_k(j)=0$ for negative $j$:

$D_k(2n+1)=\dbinom{k}{2} D_k(n) + \dbinom{k}{6} D_k(n-1) + \dbinom{k}{10} D_k(n-2) + \ldots$,

$D_k(2n+2)=\dbinom{k}{0} D_k(n+1) + \dbinom{k}{4} D_k(n) + \dbinom{k}{8} D_k(n-1) + \ldots$,
\end{theorem}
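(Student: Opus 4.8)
The plan is to generalize the parity argument used for the $D_4$ recursion to arbitrary $k$ via a single decomposition that splits each pile into its parity bit and its ``halved'' value. Write each pile as $p_i = 2q_i + \epsilon_i$ with $\epsilon_i \in \{0,1\}$. The first thing I would establish is that the nim-sum condition factors cleanly: since bit $t$ of $p_i$ for $t \ge 1$ is bit $t-1$ of $q_i$, while bit $0$ of $p_i$ is $\epsilon_i$, the equation $p_1 \oplus \cdots \oplus p_k = 0$ holds if and only if $\epsilon_1 \oplus \cdots \oplus \epsilon_k = 0$ (an even number of odd piles) \emph{and} $q_1 \oplus \cdots \oplus q_k = 0$ (the halved tuple is itself a P-position). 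This reduces the data of a P-position to two independent pieces: the even-sized set $E \subseteq \{1,\dots,k\}$ of odd piles, and a P-position $(q_1,\dots,q_k)$.

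Next I would track the total. Writing $|E| = 2j$ and $S = \sum_i q_i$, the total of the original position is $\sum_i p_i = 2S + 2j$. Since $(q_i)$ is a P-position, its total $S$ is even (using the stated fact that P-positions have even total), so write $S = 2s$; then $(q_i)$ is counted by $D_k(s)$ and the original total equals $4s + 2j$. The map $(p_i) \mapsto (E, (q_i))$ is a bijection: conversely, any even-sized $E$ together with any P-position $(q_i)$ reconstructs a unique position $p_i = 2q_i + [\,i\in E\,]$ with the prescribed parities, and the two factored conditions guarantee nim-sum zero. Hence the number of P-positions with a fixed $|E|=2j$ and $S=2s$ is exactly $\binom{k}{2j}\,D_k(s)$.

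Finally I would split on the parity of the index. For $D_k(2n+1)$ the total is $4n+2$, so $4s+2j=4n+2$ forces $j$ odd and $s=n-(j-1)/2$; running $j=1,3,5,\dots$ yields $\binom{k}{2}D_k(n)+\binom{k}{6}D_k(n-1)+\binom{k}{10}D_k(n-2)+\cdots$. For $D_k(2n+2)$ the total is $4n+4$, so $j$ is even and $s=n+1-j/2$; running $j=0,2,4,\dots$ yields $\binom{k}{0}D_k(n+1)+\binom{k}{4}D_k(n)+\binom{k}{8}D_k(n-1)+\cdots$. Terms with negative $s$ contribute nothing, matching the convention $D_k(j)=0$ for negative $j$, and terms with $2j>k$ vanish since $\binom{k}{2j}=0$, which is exactly why the $D_4$ recursion truncated after its first term.

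The step I expect to be the main obstacle is pinning down the bijection precisely: one must confirm that the parity pattern $E$ and the halved P-position $(q_i)$ can be chosen \emph{completely independently}, with no hidden compatibility constraint, so that the count factors as the clean product $\binom{k}{2j}D_k(s)$ rather than something coupled. Once that independence is nailed down, verifying that $S$ is genuinely even and that summing over all valid $j$ exhausts every P-position exactly once is routine index bookkeeping.
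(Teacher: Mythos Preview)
Your proposal is correct and takes essentially the same approach as the paper: both arguments classify P-positions by the set of odd piles, strip the parity bit, and halve to obtain a smaller P-position, then sum over the admissible sizes of that set. Your write-up is more explicit than the paper's---you spell out the factorization of the nim-sum condition into the parity constraint $\epsilon_1\oplus\cdots\oplus\epsilon_k=0$ and the halved constraint $q_1\oplus\cdots\oplus q_k=0$, and you verify the independence of $E$ and $(q_i)$ directly---whereas the paper simply says ``remove one counter from every odd pile and divide each pile by 2'' and invokes the doubling bijection; but the underlying idea is identical.
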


\begin{proof}
We exploit the fact that each P-position with only even piles and total sum $2m$ can be realized by doubling each pile in a corresponding P-position with total sum $m$, which means that there is a bijection. 

If the total number of counters is $4n+2$, then the number of odd piles could be 2, 6, 10 and so on. If there are $i$ odd piles, then we can choose which piles are odd in $\dbinom{k}{i}$ ways. Then we can remove one counter from every odd pile and divide each pile by 2. Using the bijection above, there will be $\dbinom{k}{i}D_k(n+1-\frac{i+2}{4})$ such P-positions for each choice of $i$. The case $4n+4$ is similar.
\end{proof}

For example, if $k=5$, then, $D_5(0)=1$, $D_5(1)=10$, $D_5(2n+1)= 10D_5(n)$, and $D_5(2n+2)=D_5(n+1)+5D_5(n)$. This is now sequence A238759: 1, 10, 15, 100, 65, 150, 175, 1000, 565, $\ldots$. 

Similarly, we can prove a recursive formula for $A_k(n)$.

\begin{theorem}
$A_k(2n+1)=\left(\dbinom{k}{2}+\dbinom{k}{0}\right) A_k(n) + \left(\dbinom{k}{6}+\dbinom{k}{4}\right) A_k(n-1) + \left(\dbinom{k}{10}+\dbinom{k}{8}\right) A_k(n-2) + \ldots$,

$A_k(2n+2)=\dbinom{k}{0} A_k(n+1) + \left(\dbinom{k}{2}+\dbinom{k}{4}\right) A_k(n) + \allowbreak  \left(\dbinom{k}{6}+\dbinom{k}{8}\right) \allowbreak A_k(n-1) + \ldots$,
\end{theorem}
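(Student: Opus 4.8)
The plan is to avoid a fresh combinatorial argument and instead derive the $A_k$-recursion directly from the $D_k$-recursion already proved, using the defining relation $A_k(n)=\sum_{j=0}^{n}D_k(j)$. First I would split this partial sum according to the parity of the summation index. For the odd case this gives $A_k(2n+1)=D_k(0)+\sum_{m=1}^{n}D_k(2m)+\sum_{m=0}^{n}D_k(2m+1)$, and for the even case $A_k(2n+2)=D_k(0)+\sum_{m=1}^{n+1}D_k(2m)+\sum_{m=0}^{n}D_k(2m+1)$, where throughout I adopt the convention $D_k(j)=0$ (and hence $A_k(j)=0$) for $j<0$. The reason for isolating $D_k(0)$ is that the even-index recursion $D_k(2m)=\sum_{i\ge 0}\binom{k}{4i}D_k(m-i)$ is only valid for $m\ge 1$, so the term $D_k(0)=1$ must be carried separately.

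Next I would substitute the two halves of the $D_k$-recursion and interchange the order of summation. For the odd block, $\sum_{m=0}^{n}D_k(2m+1)=\sum_{i\ge 0}\binom{k}{4i+2}\sum_{m=0}^{n}D_k(m-i)=\sum_{i\ge 0}\binom{k}{4i+2}A_k(n-i)$, because the inner sum over $m$ collapses to a partial sum of $D_k$ once the negative-index terms are dropped. For the even block the same interchange gives $\sum_{m=1}^{n}D_k(2m)=\binom{k}{0}\bigl(A_k(n)-1\bigr)+\sum_{i\ge 1}\binom{k}{4i}A_k(n-i)$, where the $-1$ is exactly the missing $D_k(0)$ inside $\sum_{m=1}^n D_k(m)=A_k(n)-D_k(0)$. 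Restoring the isolated $D_k(0)=1$ and using $\binom{k}{0}=1$ collapses the even block into $\sum_{i\ge 0}\binom{k}{4i}A_k(n-i)$, so that $A_k(2n+1)=\sum_{i\ge 0}\bigl(\binom{k}{4i}+\binom{k}{4i+2}\bigr)A_k(n-i)$, which is the claimed first identity.

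The case $A_k(2n+2)$ is handled identically, the only change being that the even block runs to $m=n+1$; its $i=0$ term then produces the isolated leading summand $\binom{k}{0}A_k(n+1)$, after which an index shift $i\mapsto i-1$ merges the remaining $\binom{k}{4i}A_k(n+1-i)$ terms with the odd block's $\binom{k}{4i+2}A_k(n-i)$ terms, giving the coefficient $\binom{k}{4j+2}+\binom{k}{4j+4}$ on $A_k(n-j)$ for $j\ge 0$. The step I expect to be the main obstacle is precisely the careful accounting of the base value $D_k(0)=1$: since it is not generated by the even-index recursion, one must reinsert it by hand and verify that it combines with the $i=0$ contribution $\binom{k}{0}(A_k-1)$ to yield exactly $\binom{k}{0}A_k$, with no leftover constant. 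Everything else—confirming that negative-index terms vanish and that the two summation interchanges are legitimate because each inner range is finite—is routine once this base case is tracked correctly.

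As an alternative I could mimic the four-pile proof and induct on $n$, using $A_k(2n+1)=A_k(2n)+D_k(2n+1)$ and $A_k(2n+2)=A_k(2n+1)+D_k(2n+2)$ together with the inductive hypothesis and the $D_k$-recursion; however, the direct summation-and-interchange route above sidesteps reproving base cases and makes the origin of the paired binomial coefficients transparent, so I would present that version.
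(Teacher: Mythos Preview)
Your proposal is correct and is exactly the approach the paper indicates: the paper's entire proof reads ``We can show this by using the partial sums of $D_k$,'' and your parity split of $A_k(N)=\sum_{j\le N}D_k(j)$ followed by substitution of the $D_k$-recursion and interchange of summation is precisely that. Your care with the isolated term $D_k(0)$ (not covered by the even-index recursion) and its cancellation against the $-1$ in $\binom{k}{0}(A_k(n)-1)$ is the only subtle point, and you handle it correctly; the paper simply omits these details.
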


\begin{proof}
We can show this by using the partial sums of $D_k$.
\end{proof}

For example, if $k=5$, then, $A_5(0)=1$, $A_5(1)=11$, $A_5(2n+1)= 11A_5(n)+5A_5(n-1)$, and $A_5(2n+2)=A_5(n+1)+15A_5(n)$. This is now sequence A238147: 1, 11, 26, 126, 191, 341, 516, 1516, 2081, $\ldots$.

\section{Acknowledgements}

We are grateful to the MIT-PRIMES program for supporting this research.

\bigskip
\hrule
\bigskip

\noindent 2010 {\it Mathematics Subject Classification}: Primary 91A46; 68Q80.

\noindent \emph{Keywords: } combinatorial games, Nim, P-positions, cellular automata.

\bigskip
\hrule
\bigskip

\noindent
(Mentions A000007, A000012, A000027, A016945, A048883, A130665. New sequences A236305, A237686, A237711, A238147, A238759, A241522, A241523, A241717, A241718, A241731)

\bigskip
\hrule
\bigskip

\end{document}